\newcommand{\mO}{\mathcal{O}}
\newcommand{\C}{\mathds{C}}
\newcommand{\R}{\mathds{R}}
\newcommand{\Z}{\mathds{Z}}
\newcommand{\N}{\mathds{N}}
\newcommand{\Proba}{\mathds{P}}
\newcommand{\E}{\mathds{E}}
\newtheorem{thm}{Theorem}[section]
\newtheorem{prop}[thm]{Proposition}
\theoremstyle{definition}
\theoremstyle{remark}
\theoremstyle{definition}
\theoremstyle{definition}
\theoremstyle{definition}
\numberwithin{equation}{section} 
\title{Interior eigenvalue density of Jordan matrices with random
  perturbations}
\author{Johannes Sj\"ostrand} 
\address[Johannes Sj\"ostrand]{Institut de Mathématiques de Bourgogne - UMR 5584 CNRS, Universit\'e de Bourgogne, 
		       Faculté des Sciences Mirande, 9 avenue Alain Savary, 
		       BP 47870 21078 Dijon Cedex.}
\email{johannes.sjostrand@u-bourgogne.fr}
\author{Martin Vogel} 
\address[Martin Vogel]{Institut de Mathématiques de Bourgogne - UMR 5584 CNRS, Universit\'e de Bourgogne, 
		       Faculté des Sciences Mirande, 9 avenue Alain Savary, 
		       BP 47870 21078 Dijon Cedex.}
\email{martin.vogel@u-bourgogne.fr}
\keywords{Spectral theory; non-selfadjoint operators; random perturbations}
\subjclass[2010]{47A10, 47B80, 47H40, 47A55}
\begin{document}
\dedicatory{Dedicated to the memory of Mikael Passare}
\begin{abstract}
We study the eigenvalue distribution of a large Jordan block 
subject to a small random Gaussian perturbation. A result 
by E.B. Davies and M. Hager shows that as the dimension of the 
matrix gets large, with probability close to $1$, most of the 
eigenvalues are close to a circle. 
\par
We study the expected eigenvalue density of the perturbed Jordan 
block in the interior of that circle and give a precise asymptotic 
description.
  \vskip.5cm
  \par\noindent \textsc{R{\'e}sum{\'e}.}
Nous \'etudions la distribution de valeurs propres d'un grand 
bloc de Jordan soumis \`a une petite perturbation gaussienne 
al\'eatoire. Un r\'esultat de E.B. Davies et M. Hager montre que 
quand la dimension de la matrice devient grande, alors avec 
probabilit\'e proche de $1$, la plupart des valeurs propres sont proches 
d'un cercle.
\par
Nous \'etudions la r\'epartitions moyenne des valeurs propres 
\`a l'int\'erieur de ce cercle et nous en donnons une description 
asymptotique pr\'ecise. 
\end{abstract}
\maketitle
\setcounter{tocdepth}{1}
\tableofcontents
\section{Introduction}
In recent years there has been a renewed interested in the spectral 
theory of non-self-adjoint operators where, as opposed to the self-adjoint 
case, the norm of the resolvent can be very large even far away from 
the spectrum. Equivalently the spectrum of such operators can be 
highly unstable even under very small perturbations of the operator. 
\par
Emphasized by the works of L.N. Trefethen and M. Embree, see for example 
\cite{TrEm05}, E.B. Davies, M. Zworski and many others 
\cite{Da97,Da99,NSjZw04,ZwChrist10,DaHa09}, 
the phenomenon of spectral instability of non-self-adjoint operators has become 
a popular and vital subject of study. In view of this it is very natural to add 
small random perturbations.
\par
One line of recent research concerns the case of elliptic (pseudo)\-differential 
operators subject to small random perturbations, cf. 
\cite{BM,Ha06,Ha06b,HaSj08,SjAX1002,Vo14}.
\subsection{Perturbations of Jordan blocks}
In this paper we shall study the spectrum of a random perturbation of
the large Jordan block $A_0$ :
\begin{equation}\label{jb}
A_0=\begin{pmatrix}0 &1 &0 &0 &...&0\\
                   0 &0 &1 &0 &...&0\\
                   0 &0 &0 &1 &...&0\\
                   . &. &. &. &...&.\\
                   0 &0 &0 &0 &...&1\\
                   0 &0 &0 &0 &...&0
\end{pmatrix}: {\C}^N\to {\C}^N.
\end{equation}
Perturbations of a large Jordan block have already been studied, 
cf. \cite{SjZw07,Zw02,DaHa09,GuMaZe14}.
\begin{itemize}
\item M.~Zworski \cite{Zw02} noticed that for every $z\in D(0,1)$,
  there are associated exponentially accurate quasi-modes when $N\to
  \infty $. Hence the open unit disc is a region of spectral 
  instability.
\item We have spectral stability (a good resolvent estimate) in
${\C}\setminus \overline{D(0,1)}$, since $\| A_0\| =1$.
\item $\sigma (A_0)=\{0 \}$.  
\end{itemize}
Thus, if $A_\delta =A_0+\delta Q$ is a small (random) perturbation of
$A_0$ we expect the eigenvalues to move inside a small neighborhood of $\overline{D(0,1)}$.
\par
In the special case when $Qu=(u|e_1)e_N$, where $(e_j)_1^N$ is the canonical
basis in ${\C}^N$, the eigenvalues of $A_\delta $ are of the form 
$$
\delta ^{1/N}e^{2\pi ik/N},\ k\in {\Z}/N{\Z},
$$
so if we fix $0<\delta \ll 1$ and let $N\to \infty $, the spectrum
``will converge to a uniform distribution on $S^1$''.
\par E.B.~Davies and M.~Hager \cite{DaHa09} studied random
perturbations of $A_0$. They showed that with probability close to 1,
most of the eigenvalues are close to a circle:
\begin{thm}\label{th1}
Let $A=A_0+\delta Q$, $Q=(q_{j,k}(\omega ))$ where $q_{j,k}$ are
independent and identically distributed random variables 
$\sim {\mathcal{N}}_{{\C}}(0,1)$.
If $0<\delta \le N^{-7}$, $R=\delta
^{1/N}$, $\sigma >0$, then with probability $\ge 1-2N^{-2}$, we have
$\sigma (A_\delta )\subset D(0,RN^{3/N})$ and
$$
\# (\sigma (A_\delta )\cap D(0,Re^{-\sigma }))\le \frac{2}{\sigma
}+\frac{4}{\sigma }\ln N.
$$
\end{thm}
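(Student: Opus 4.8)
The plan is to identify the eigenvalues of $A=A_0+\delta Q$ with the zeros of the characteristic polynomial $P(z):=\det(z-A)$ and to control both their location and their number near $0$ using the explicit resolvent of the nilpotent $A_0$, a crude high-probability bound on $Q$, and Jensen's formula. The starting point is the factorization, valid for $z\ne0$,
\[
P(z)=\det(z-A_0)\,\det\!\bigl(I-M(z)\bigr),\qquad M(z):=\delta\,(z-A_0)^{-1}Q,
\]
together with $\det(z-A_0)=z^{N}$ and the estimate $\|(z-A_0)^{-1}\|\le N\max(1,|z|^{-N})$, which follows from $(z-A_0)^{-1}=\sum_{k=0}^{N-1}z^{-1-k}A_0^{k}$ and $\|A_0^{k}\|=1$ for $0\le k<N$.

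I would run the argument on the event $\mathcal G:=\{\|Q\|_{HS}\le 2N\}\cap\{|q_{N,1}|\ge N^{-1}\}$. A standard Chernoff bound for $\|Q\|_{HS}^{2}$, a sum of $N^{2}$ independent mean-one exponentials, bounds the first failure probability by $e^{-cN^{2}}$, and since $|q_{N,1}|^{2}$ is exponentially distributed the second fails with probability $\le N^{-2}$; so $\Proba(\mathcal G)\ge1-2N^{-2}$ for $N$ large. On $\mathcal G$, put $r:=RN^{3/N}$, so that $r^{N}=\delta N^{3}$. For $|z|\ge r$ one has $\|M(z)\|\le 2\delta N^{2}\max(1,|z|^{-N})\le 2\delta N^{2}r^{-N}=2N^{-1}<1$, hence $I-M(z)$ and therefore $z-A=(z-A_0)\bigl(I-M(z)\bigr)$ is invertible; this gives $\sigma(A)\subset D(0,RN^{3/N})$. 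On the circle $|z|=r$ the same bound yields $\|M(z)\|_{\mathrm{tr}}\le\sqrt N\,\|M(z)\|_{HS}\le 2N^{-1/2}\le1$, so $|\det(I-M(z))|\le e^{\|M(z)\|_{\mathrm{tr}}}\le e$ and hence
\[
\max_{|z|=r}|P(z)|\le r^{N}e=e\,\delta N^{3}.
\]

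The crucial estimate is a matching lower bound for $|P(0)|=|\det(A_0+\delta Q)|$. Expanding this determinant by multilinearity in the columns and using that the first column of $A_0$ vanishes, only subsets $S\ni1$ of columns replaced by the corresponding columns of $\delta Q$ contribute: $S=\{1\}$ produces exactly $\pm\delta q_{N,1}$, while for $|S|=k\ge2$ the remaining $N-k$ columns are distinct standard basis vectors, so the term reduces to a $k\times k$ minor of $Q$, which Hadamard's inequality bounds by $\|Q\|_{HS}^{k}$. Summing, on $\mathcal G$,
\[
\bigl|\det(A_0+\delta Q)\mp\delta q_{N,1}\bigr|\le\sum_{k\ge2}\binom{N-1}{k-1}\delta^{k}\|Q\|_{HS}^{k}\le 8\delta^{2}N^{3}\le 8\delta N^{-4},
\]
using $\delta\le N^{-7}$; therefore $|P(0)|\ge\delta|q_{N,1}|-8\delta N^{-4}\ge\delta(N^{-1}-8N^{-4})\ge\delta/(2N)$ for $N$ large, i.e.\ $\log|P(0)|\ge\log\delta-\ln(2N)$.

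Finally I would apply Jensen's formula to $P$ on $D(0,r)$ with inner radius $\rho:=Re^{-\sigma}<r$, using $\tfrac1{2\pi}\int_0^{2\pi}\log|P(re^{i\theta})|\,d\theta\le\log\max_{|z|=r}|P|$ and the two bounds above:
\[
\#\bigl(\sigma(A)\cap D(0,\rho)\bigr)\le\frac{\log\max_{|z|=r}|P|-\log|P(0)|}{\log(r/\rho)}\le\frac{(\log\delta+3\ln N+1)-(\log\delta-\ln(2N))}{\sigma+\tfrac3N\ln N}\le\frac{4\ln N+2}{\sigma},
\]
which is the asserted estimate $\#(\sigma(A)\cap D(0,Re^{-\sigma}))\le\frac2\sigma+\frac4\sigma\ln N$. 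I expect the main obstacle to be the lower bound for $|P(0)|$: naively comparing $\det(A_0+\delta Q)$ with its leading term $\pm\delta q_{N,1}$ looks dangerous when $\delta$ is very small, but the error term carries \emph{two} powers of $\delta$ while only one of them is divided out, so the anti-concentration of the single Gaussian entry $q_{N,1}$ suffices for every $0<\delta\le N^{-7}$. This asymmetry between $\det(z-A_0)=z^{N}$ and $M(z)=\mO(\delta)$, together with the balanced choice $r^{N}=\delta N^{3}$, is exactly what forces the eigenvalues to cluster near $|z|=R$ rather than near $0$.
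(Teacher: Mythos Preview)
The paper does not give its own proof of this theorem: it is quoted as a result of Davies and Hager \cite{DaHa09}, and the remainder of the paper is devoted entirely to Theorem~\ref{ed5}. There is therefore nothing in the paper to compare your argument against.

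Assessed on its own merits, your argument is sound and follows the natural route one would expect for such a statement. The factorization $P(z)=z^N\det(I-M(z))$ with $M(z)=\delta(z-A_0)^{-1}Q$ and the resolvent bound $\|(z-A_0)^{-1}\|\le N\max(1,|z|^{-N})$ are correct; the choice $r^N=\delta N^3$ makes $\|M(z)\|\le 2/N$ on $|z|\ge r$, giving the localization, and $\|M(z)\|_{\mathrm{tr}}\le\sqrt{N}\,\|M(z)\|_{HS}\le 2N^{-1/2}$ together with $|\det(I-M)|\le e^{\|M\|_{\mathrm{tr}}}$ yields $\max_{|z|=r}|P|\le e\,\delta N^3$. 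The lower bound at the origin is the delicate point, and your treatment is correct: in the multilinear column expansion of $\det(A_0+\delta Q)$ only subsets $S\ni 1$ contribute, the $|S|=1$ term is exactly $\pm\delta q_{N,1}$, and for $|S|=k\ge 2$ the remaining $N-k$ columns of $A_0$ are distinct canonical basis vectors so the term is $\pm\delta^k$ times a $k\times k$ minor of $Q$, bounded by Hadamard by $\|Q\|_{HS}^k$; summing with $\binom{N-1}{k-1}\le N^{k-1}$ gives the error $\le 8\delta^2 N^3\le 8\delta N^{-4}\ll \delta/N\le\delta|q_{N,1}|$ on your event. Jensen's inequality then gives
\[
\#\bigl(\sigma(A)\cap D(0,Re^{-\sigma})\bigr)\le\frac{\log(e\delta N^3)-\log(\delta/(2N))}{\sigma+\tfrac{3}{N}\ln N}\le\frac{1+\ln 2+4\ln N}{\sigma}<\frac{2}{\sigma}+\frac{4}{\sigma}\ln N,
\]
as claimed. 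The only cosmetic point is that several inequalities ($2/N<1$, $2N^{-1/2}\le 1$, $e^{-cN^2}\le N^{-2}$, $N^{-1}-8N^{-4}\ge(2N)^{-1}$) tacitly require $N$ to exceed a fixed absolute constant; the theorem as stated carries no such restriction, but this is easily absorbed since for bounded $N$ one can adjust the choice of event and constants without difficulty.
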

  A recent result by A.~Guionnet, P.~Matched Wood and
  O.~Zeitouni \cite{GuMaZe14} implies that when $\delta $ is
  bounded from above by $N^{-\kappa -1/2}$ for some $\kappa >0$ and
  from below by some negative power of $N$, then 
$$
\frac{1}{N}\sum_{\mu \in \sigma (A_{\delta})}\delta (z-\mu )\to \hbox{the
  uniform measure on }S^1,
$$
weakly in probability.
 \\
 \par
  The main purpose of this paper is to obtain, for a small coupling 
  constant $\delta$, more information about the distribution of 
  eigenvalues of $A_\delta$ in the interior of a disc, where the 
  result of Davies and Hager only yields a logarithmic upper bound 
  on the number of eigenvalues; see Theorem \ref{ed5} below.
  \par
  In order to obtain more information in this
  region, we will study the expected eigenvalue density, adapting the
  approach of \cite{Vo14}. (For random polynomials and Gaussian analytic 
  functions such results are more classical, 
  \cite{Kac43,SZ03,HoKrPeVi09,So00,ShZe98,Sh08}.) 
  \\
  \par 
  \textit{Acknowledgments.}
We would like to thank St\'ephane Nonnenmacher for his observation 
on the relation of the density in Theorem \ref{ed5} with the Poincar\'e 
metric. The first author was partially supported by the project ANR 
NOSEVOL $2011$ BS $010119$ $01$.
\section{Main result}\label{mres}
Let $0< \delta \ll 1$ and consider the following random 
perturbation of $A_0$ as in \eqref{jb}: 
\begin{equation}\label{jb.p}
 A_{\delta} = A_0 + \delta Q, \quad Q=(q_{j,k})_{1 \leq j,k\leq N},
\end{equation}
where $q_{j,k}$ are independent and identically distributed 
complex random variables, following the complex Gaussian law 
$\mathcal{N}_{\C}(0,1)$.
\par
It has been observed by Bordeaux-Montrieux \cite{BM} the we have the following 
result.
\begin{prop}
There exists a $C_0>0$ such that the following holds: Let 
$X_j\sim \mathcal{N}_{\C}(0,\sigma_j^2)$, $1\leq j\leq N <\infty$ be 
independent and identically distributed complex Gaussian random 
variables. Put $s_1=\max \sigma_j^2$. Then, for every $x>0$, we 
have 
 \begin{equation*}
  \Proba\left[\sum_{j=1}^N |X_j|^2 \geq x\right]
    \leq \exp\left( 
    \frac{C_0}{2s_1} \sum_{j=1}^N \sigma_j^2 - \frac{x}{2s_1}
    \right).
 \end{equation*}
\end{prop}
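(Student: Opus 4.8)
The plan is to use the classical Cram\'er--Chernoff exponential moment method. Since each $X_j$ is a centered complex Gaussian with $\E|X_j|^2=\sigma_j^2$, the nonnegative random variable $|X_j|^2$ is exponentially distributed with mean $\sigma_j^2$; equivalently $|X_j|^2/\sigma_j^2$ has density $\e^{-s}$ on $\{s>0\}$. A direct computation then gives the Laplace transform
\begin{equation*}
\E\left[\e^{t|X_j|^2}\right]=\frac{1}{1-t\sigma_j^2},\qquad 0\le t<\frac{1}{\sigma_j^2}.
\end{equation*}

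First I would fix $t\in(0,1/s_1)$, which by the definition $s_1=\max_j\sigma_j^2$ is admissible simultaneously for every $j$. By the exponential Markov (Chebyshev) inequality together with the independence of the $X_j$,
\begin{equation*}
\Proba\left[\sum_{j=1}^N|X_j|^2\ge x\right]\le \e^{-tx}\,\E\left[\e^{t\sum_{j}|X_j|^2}\right]=\e^{-tx}\prod_{j=1}^N\frac{1}{1-t\sigma_j^2},
\end{equation*}
so the right-hand side equals $\exp\bigl(-tx-\sum_{j=1}^N\log(1-t\sigma_j^2)\bigr)$. Next I would specialise to $t=1/(2s_1)$, so that $t\sigma_j^2\le 1/2$ for all $j$, and invoke the elementary bound $-\log(1-u)\le C_0 u$ on $[0,1/2]$ (the function $u\mapsto -\log(1-u)/u$ being increasing on $(0,1)$, one may take $C_0=2\log 2$, though any fixed constant $\ge 2\log 2$ works). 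This gives
\begin{equation*}
-\sum_{j=1}^N\log(1-t\sigma_j^2)\le C_0\, t\sum_{j=1}^N\sigma_j^2=\frac{C_0}{2s_1}\sum_{j=1}^N\sigma_j^2,
\end{equation*}
and combining with the previous display together with $tx=x/(2s_1)$ yields exactly the asserted inequality.

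The argument is entirely routine, so there is no genuine obstacle; the only point requiring a little care is the normalisation convention for $\mathcal{N}_{\C}(0,\sigma^2)$, namely that $\sigma_j^2=\E|X_j|^2$, so that the density of $X_j$ is $(\pi\sigma_j^2)^{-1}\e^{-|z|^2/\sigma_j^2}$ and $|X_j|^2$ is genuinely exponential of parameter $1/\sigma_j^2$, making the Laplace transform above correct. (Had $\sigma_j^2$ instead denoted the common variance of the real and imaginary parts, one would simply replace $\sigma_j^2$ by $2\sigma_j^2$ throughout and enlarge $C_0$ accordingly.) The ``hard part'', such as it is, is merely the bookkeeping of the universal constant $C_0$, which we do not attempt to optimise.
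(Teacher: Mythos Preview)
Your proof is correct. The paper itself does not give a proof of this proposition: it simply attributes the result to Bordeaux--Montrieux \cite{BM} and then applies it, so there is no argument in the paper to compare against. Your Cram\'er--Chernoff computation is the standard route and is carried out correctly, including the point about the normalisation $\sigma_j^2=\E|X_j|^2$ (which is indeed the convention the paper uses, cf.\ the application just after the proposition with $\sigma_j^2=1$ and $s_1=1$).
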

According to this result we have 
$$
P(\Vert Q\Vert_\mathrm{HS}^2\ge x)\le \exp \left(\frac{C_0}{2}N^2-\frac{x}{2} \right)
$$
and hence if $C_1>0$ is large enough,
\begin{equation}\label{pj.28}
\Vert Q\Vert_\mathrm{HS}^2\le C_1^2N^2,\hbox{ with probability }\ge 1-e^{-N^2}.
\end{equation}
In particular (\ref{pj.28}) holds for the ordinary operator norm of $Q$. 
We now state the principal result of this work.
\begin{thm}\label{ed5}
Let $A_{\delta}$ be the $N\times N$-matrix in \eqref{jb.p} and restrict the
attention to the parameter range
$e^{-N/{\mO}(1)}\le \delta \ll 1$, $N\gg 1$. Let $r_0$ belong to a
parameter range,
$$
\frac{1}{{\mO}(1)}\le r_0\le 1-\frac{1}{N},
$$
\begin{equation}\label{ed.60}
\frac{r_0^{N-1}N}{\delta}(1-r_0)^2+\delta N^3\ll 1,
\end{equation}
so that $\delta \ll N^{-3}$. Then, for all $\varphi\in\mathcal{C}_0(D(0,r_0-1/N))$
  \begin{equation*}\label{ed.61}
   \E\left[ 
      \mathds{1}_{B_{\C^{N^2}}(0,C_1N)}(Q) 
      \sum_{\lambda\in\sigma(A_{\delta})}
      \varphi(\lambda)
      \right]
   =
    \frac{1}{2\pi}\int\varphi(z)\Xi(z) L(dz),
  \end{equation*}
 where 
 \begin{equation*}
  \Xi(z) =  \frac{4}{(1-|z|^2)^2}
    \left(
    1 + 
    \mO\!\left(\frac{|z|^{N-1}N}{\delta}(1-|z|)^2
	      +\delta N^3\right)
    \right).
 \end{equation*}
is a continuous function independent of $r_0$. 
$C_1>0$ is the constant in (\ref{pj.28}).
\end{thm}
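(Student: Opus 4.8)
By a density argument it suffices to treat $\varphi\in\mathcal C^\infty_0(D(0,r_0-1/N))$. The starting point is the potential–theoretic identity
\[
\sum_{\lambda\in\sigma(A_\delta)}\varphi(\lambda)=\frac1{2\pi}\int\Delta\varphi(z)\,\log|\det(A_\delta-z)|\,L(dz),
\]
valid for every realization of $Q$. Writing $\mathds{1}:=\mathds{1}_{B_{\C^{N^2}}(0,C_1N)}(Q)$ and applying $\E[\mathds{1}\,\cdot\,]$, the statement reduces to showing that $z\mapsto\E\big[\mathds{1}\,\log|\det(A_\delta-z)|\big]$ is of class $\mathcal C^2$ on $D(0,r_0-1/N)$ with Laplacian equal to the continuous function $\Xi(z)$. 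From now on we work on the event $\{\|Q\|_{\mathrm{HS}}\le C_1N\}$, where $\|\delta Q\|\le\delta C_1N\ll1$.

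The plan is to reduce $\det(A_\delta-z)$ to a scalar effective Hamiltonian through a Grushin problem with a fixed, $z$-independent, rank–one perturbation,
\[
\mathcal P_\delta(z)=\begin{pmatrix}A_\delta-z & R_-\\ R_+ & 0\end{pmatrix},\qquad R_-=e_N,\quad R_+u=(u\mid e_1),
\]
which depends holomorphically on $z$. A direct computation shows that $\mathcal P_0(z)$ is invertible with $\det\mathcal P_0(z)=\pm1$ for all $z$, with inverse $\mathcal E_0(z)=\begin{pmatrix}E^0 & E_+^0\\ E_-^0 & E_{-+}^0\end{pmatrix}$ where $E_{-+}^0(z)=z^N$, $E_+^0(z)=(1,z,\dots,z^{N-1})^{\mathrm t}$, $E_-^0(z)=(z^{N-1},\dots,z,1)$, $E^0(z)$ strictly lower triangular with entries $z^{i-j-1}$, and $\|\mathcal E_0(z)\|=\mO\!\big((1-|z|^2)^{-1/2}N^{1/2}\big)=\mO(N)$ on $|z|\le r_0-1/N$. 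Since $\delta\|Q\|\,\|\mathcal E_0(z)\|=\mO(\delta N^2)\ll1$, a Neumann series yields the invertibility of $\mathcal P_\delta(z)$, the Schur–complement identity $|\det(A_\delta-z)|=|\det\mathcal P_\delta(z)|\,|E_{-+}^\delta(z)|$, and the Neumann expansion of $E_{-+}^\delta$. Moreover $\det\mathcal P_\delta=\det\mathcal P_0\cdot\det(I+\delta\mathcal E_0\hat Q)$ with $\|\delta\mathcal E_0\hat Q\|_{\mathrm{tr}}=\mO(\delta N^{5/2})\ll1$, so $|\det\mathcal P_\delta(z)|\ge\tfrac12$ on $D(0,r_0-1/N)$; hence $\det\mathcal P_\delta(\cdot)$ is holomorphic and non-vanishing there and $z\mapsto\log|\det\mathcal P_\delta(z)|$ is harmonic, contributing nothing to the Laplacian. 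Everything therefore reduces to $\E[\mathds{1}\,\log|E_{-+}^\delta(z)|]$.

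Here is the heart of the matter. To first order $E_{-+}^\delta(z)=z^N-\delta\,g(z)+\mO(\delta^2\|Q\|^2\|\mathcal E_0(z)\|^3)$, where $g(z):=E_-^0(z)\,Q\,E_+^0(z)$ is a scalar complex Gaussian, $g(z)\sim\mathcal N_{\C}\big(0,\rho(z)^4\big)$ with $\rho(z)^2=\|E_+^0(z)\|^2=\frac{1-|z|^{2N}}{1-|z|^2}$. Using the elementary identity $\E\,\log|a-\mathcal N_{\C}(0,s^2)|=\log|a|+\tfrac12E_1(|a|^2/s^2)$ — where $E_1$ is the exponential integral, the function $w\mapsto\log|w|+\tfrac12E_1(|w|^2)$ is smooth and equals $\log|w|+\mO(\e^{-|w|^2})$ for large $|w|$, while $E_1(w)=-\log w-\gamma+\mO(w)$ as $w\to0$ — the leading contribution is
\[
\E\,\log|z^N-\delta g(z)|=N\log|z|+\tfrac12E_1\big(w(z)\big),\qquad w(z)=\frac{|z|^{2N}(1-|z|^2)^2}{\delta^2(1-|z|^{2N})^2}.
\]
Condition \eqref{ed.60} forces $\sqrt{w(z)}\lesssim\frac{|z|^{N-1}N}{\delta}(1-|z|)^2\ll1$ on $\operatorname{supp}\varphi$, so the $N\log|z|$ terms cancel and
\[
\E\,\log|E_{-+}^\delta(z)|=-\log(1-|z|^2)+\log(1-|z|^{2N})+\log\delta+\text{const}+\mO(w(z))+(\text{remainder}).
\]
Since $\Delta\big(-\log(1-|z|^2)\big)=\tfrac4{(1-|z|^2)^2}$ this produces the main term of $\Xi$; the corrections $\Delta\log(1-|z|^{2N})=-\tfrac{4N^2|z|^{2N-2}}{(1-|z|^{2N})^2}$ and $\Delta\,\mO(w(z))=\mO(N^2w(z))$ are of the relative order $\mO\!\big(\frac{|z|^{N-1}N}{\delta}(1-|z|)^2+\delta N^3\big)$ claimed.

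The hard part will be making the two error sources rigorous and uniform in $z$: the quadratic–and–higher part of the Neumann expansion of $E_{-+}^\delta$, and the interchange of $\Delta_z$ with $\E[\mathds{1}\,\cdot\,]$. Since $E_{-+}^\delta(\cdot)$ vanishes exactly at the eigenvalues of $A_\delta$ — which fill the disc with the very density under study — controlling $\E[\mathds{1}\,|\log|1+r(z)||]$ and its second $z$-derivatives, with $r=(E_{-+}^\delta-z^N+\delta g)/(z^N-\delta g)$, requires anticoncentration estimates for the Gaussian $z^N-\delta g(z)$ of the form $\Proba(|z^N-\delta g(z)|<t)=\mO\big(t^2/(\delta\rho(z)^2)^2\big)$, together with the deterministic bound $|r(z)|=\mO\big(\delta^2\|Q\|^2\|\mathcal E_0(z)\|^3/|z^N-\delta g(z)|\big)$ on the good event; this is the point where the method of \cite{Vo14} is adapted. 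The same estimates supply the required $\mathcal C^2$–regularity of $z\mapsto\E[\mathds{1}\,\log|\det(A_\delta-z)|]$, and hence the continuity of $\Xi$ and its independence of $r_0$; what remains is the bookkeeping of which error term dominates near $z=0$ versus near $|z|=r_0$.
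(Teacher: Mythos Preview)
Your route is genuinely different from the paper's. The paper never writes $\log|\det(A_\delta-z)|$ or takes a Laplacian; instead it works directly with the joint zero set $\Sigma=\{(z,Q):E_{-+}^\delta(z,Q)=0\}\subset\C_z\times\C^{N^2}_Q$. Since $(\overline{Z}(z)\cdot\partial_Q)E_{-+}^\delta\asymp\delta G^2\neq0$ on $\Sigma$, one parametrizes $\Sigma$ as a graph $\alpha_1=f(z,\alpha')$ in coordinates $Q=\alpha_1\overline{Z}(z)+\alpha'$ and the expected counting integral becomes $\int_\Sigma\phi(z)m(Q)\,d\overline Q\wedge dQ$; the density $\Xi(z)$ then comes from an explicit Jacobian $J(f)$ computed in carefully chosen bases $e_1(z),\dots,e_{N^2}(z)$ (Sections~4--5). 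No logarithmic singularity ever appears, and the error terms in $J(f)$ are polynomial in the Neumann remainder. Your approach, by contrast, reduces to the Gaussian analytic function $z^N-\delta g(z)$ and extracts the Poincar\'e density from $\Delta_z\E\log|z^N-\delta g(z)|$ via the closed formula with $E_1$; this part is correct and gives the leading term more quickly than the paper's long Jacobian calculation.

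The gap is exactly where you place it, but it is more serious than the sketch lets on, and it is \emph{not} what \cite{Vo14} does (that reference, like the present paper, uses the zero-set/Jacobian formula, not log-potentials). You need the distributional Laplacian $\Delta_z\E[\mathds{1}\log|1+r(z)|]$, with $r=R/(z^N-\delta g)$ and $R=\mO(G^2\delta^2N^2)$, to be a \emph{continuous} function of size $\mO\bigl(G^2(\frac{|z|^{N-1}N}{\delta G^2}+\delta N^3)\bigr)$. Three obstacles: (i) $R$ is nonlinear in $Q$ and correlated with $g$, so you cannot simply condition it away; (ii) for each fixed $Q$, $\log|1+r|$ has dipole singularities at the nearby zeros of $z^N-\delta g$ and of $E_{-+}^\delta$, and two $z$-derivatives of such dipoles are not controlled by the size of $r$ alone; (iii) the anticoncentration bound $\Proba(|z^N-\delta g|<t)\le t^2/(\delta\rho^2)^2$ gives $L^1_Q$-integrability of $\log|1+r|$, but after differentiating twice in $z$ the same scheme loses at least a factor $G^2$, which spoils the error. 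One can probably rescue the argument by first showing that the eigenvalues of $A_\delta$ and the zeros of $z^N-\delta g$ are in bijection and $\mO(G\delta N)$-close (a Rouch\'e argument on $E_{-+}^\delta$) and then comparing the two point processes directly, but that is a different proof from the one you outlined and is not shorter than the paper's Jacobian computation.

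A minor correction: $\|E^0\|\le G(|z|)$, not $(1-|z|^2)^{-1/2}N^{1/2}$; your final $\mO(N)$ bound is nevertheless correct on $|z|\le r_0-1/N$.
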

Condition \eqref{ed.60} is equivalent to
\begin{equation*}
 r_0^{N-1}(1-r_0)^2 \ll \frac{\delta}{N}\left( 1 - \delta N^3\right).
\end{equation*}
It is necessary that $r_0 < 1 - 2(N+1)^{-1}$ for this inequality to 
be satisfied. For such $r_0$ the function $[0,r_0]\ni r \mapsto r^{N-1}(1-r)^2$ 
is increasing, and so inequality \eqref{ed.60} is preserved if we replace $r_0$ by 
$|z|\leq r_0$.
\\
\par
The leading contribution of the density $\Xi(z)$ is 
independent of $N$ and is equal to the Lebesgue density 
of the volume form induced by the Poincar\'e metric on the 
disc $D(0,1)$. This yields a very small density of 
eigenvalues close to the center of the disc $D(0,1)$ which is, 
however, growing towards the boundary of $D(0,1)$.
\\
\par
A similar result has been obtained by M. Sodin and B. Tsirelson 
in \cite{SoTs04} for the distribution of zeros of a certain class 
of random analytic functions with domain $D(0,1)$ linking the 
fact that the density is given by the volume form induced by 
the Poincar\'e metric on $D(0,1)$ to its invariance under 
the action of $SL_2(\R)$.
\subsection{Numerical Simulations}
To illustrate the result of Theorem \ref{ed5}, we present the 
following numerical calculations (Figure \ref{fig:e5_e4} and 
\ref{fig:e3_e2}) for the eigenvalues of the 
$N\times N$-matrix in \eqref{jb.p}, where $N=500$ and the coupling 
constant $\delta$ varies from $10^{-5}$ to $10^{-2}$. 
\begin{figure}[ht]
 \begin{minipage}[b]{0.49\linewidth}
  \centering
  \includegraphics[width=\textwidth]{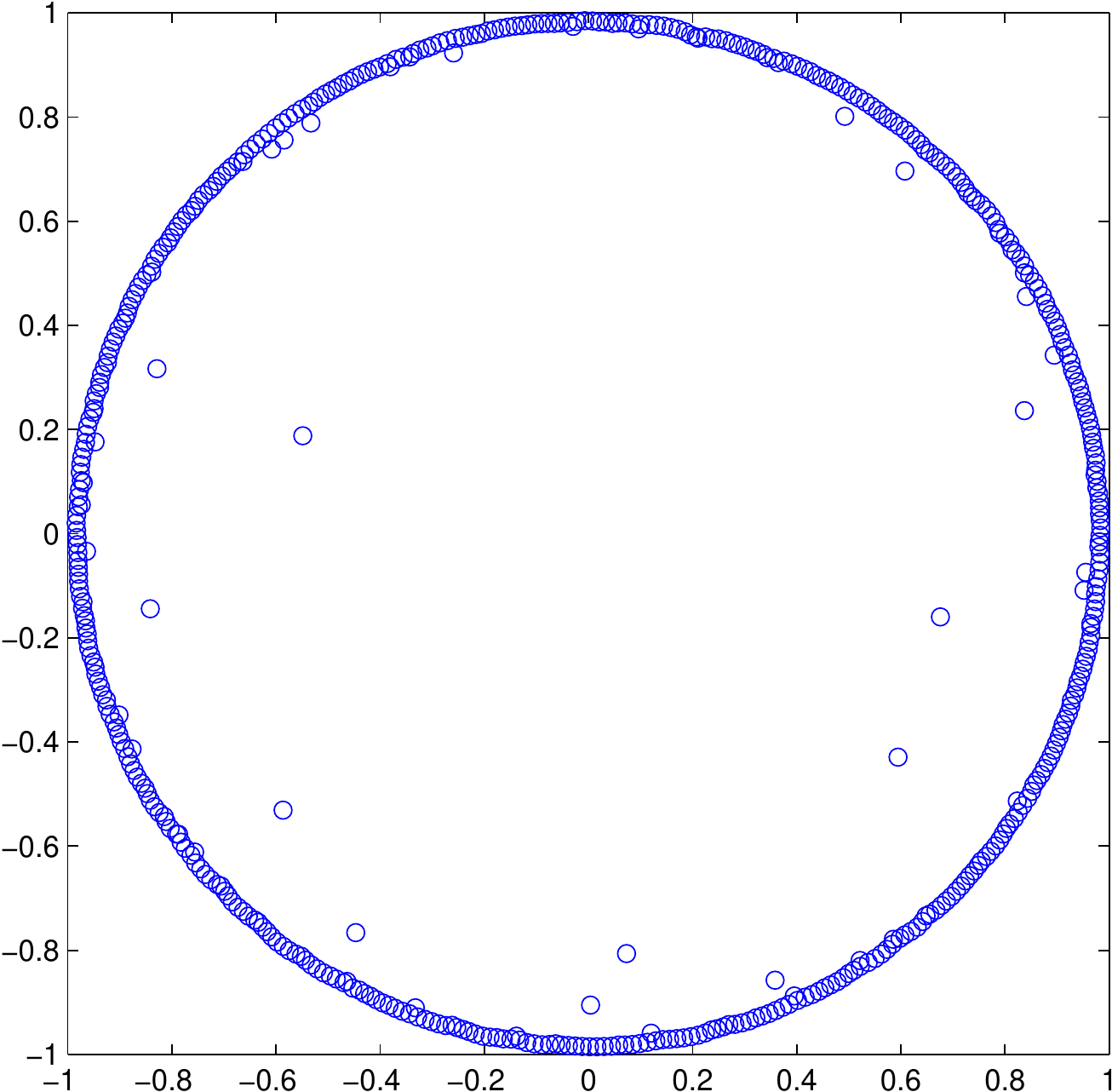}
 \end{minipage}
 \hspace{0cm}
 \begin{minipage}[b]{0.49\linewidth}
  \centering 
  \includegraphics[width=\textwidth]{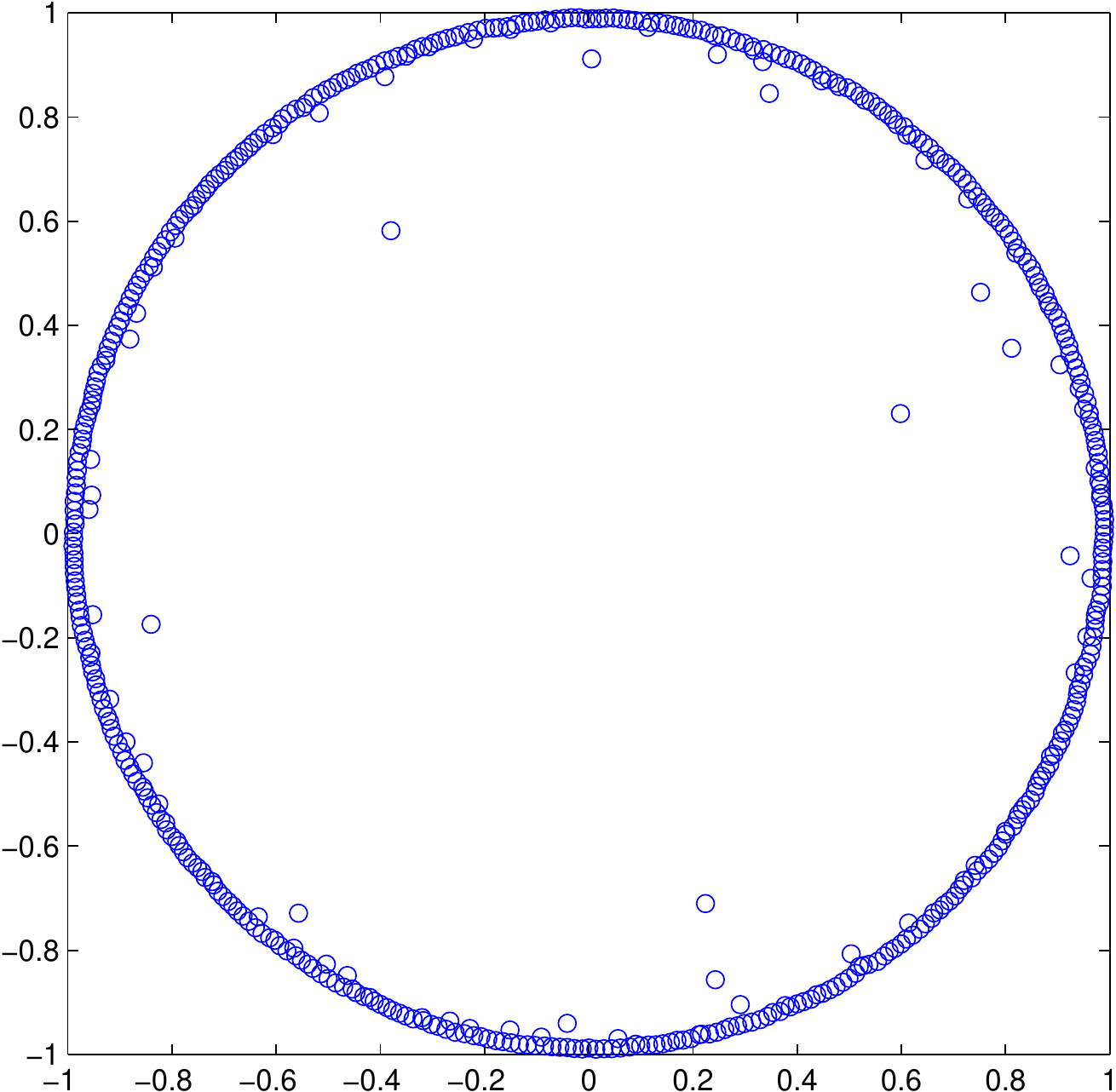}
 \end{minipage}
 \caption{On the left hand side $\delta=10^{-5}$ and 
  on the right hand side $\delta=10^{-4}$.}
  \label{fig:e5_e4}
\end{figure}
\begin{figure}[ht]
 \begin{minipage}[b]{0.44\linewidth}
  \centering
  \includegraphics[width=\textwidth]{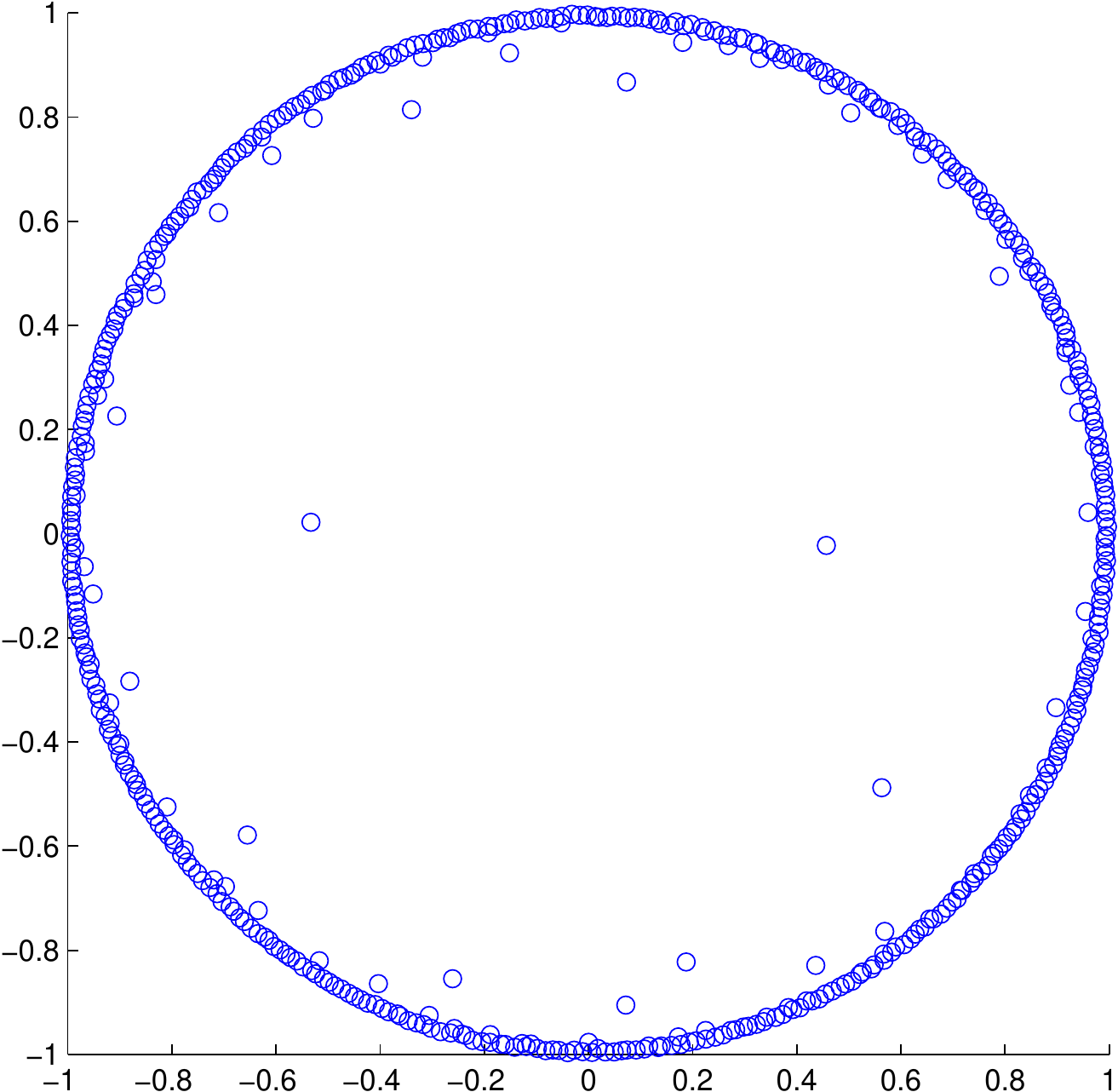}
 \end{minipage}
 \hspace{0cm}
 \begin{minipage}[b]{0.54\linewidth}
  \centering 
  \includegraphics[width=\textwidth]{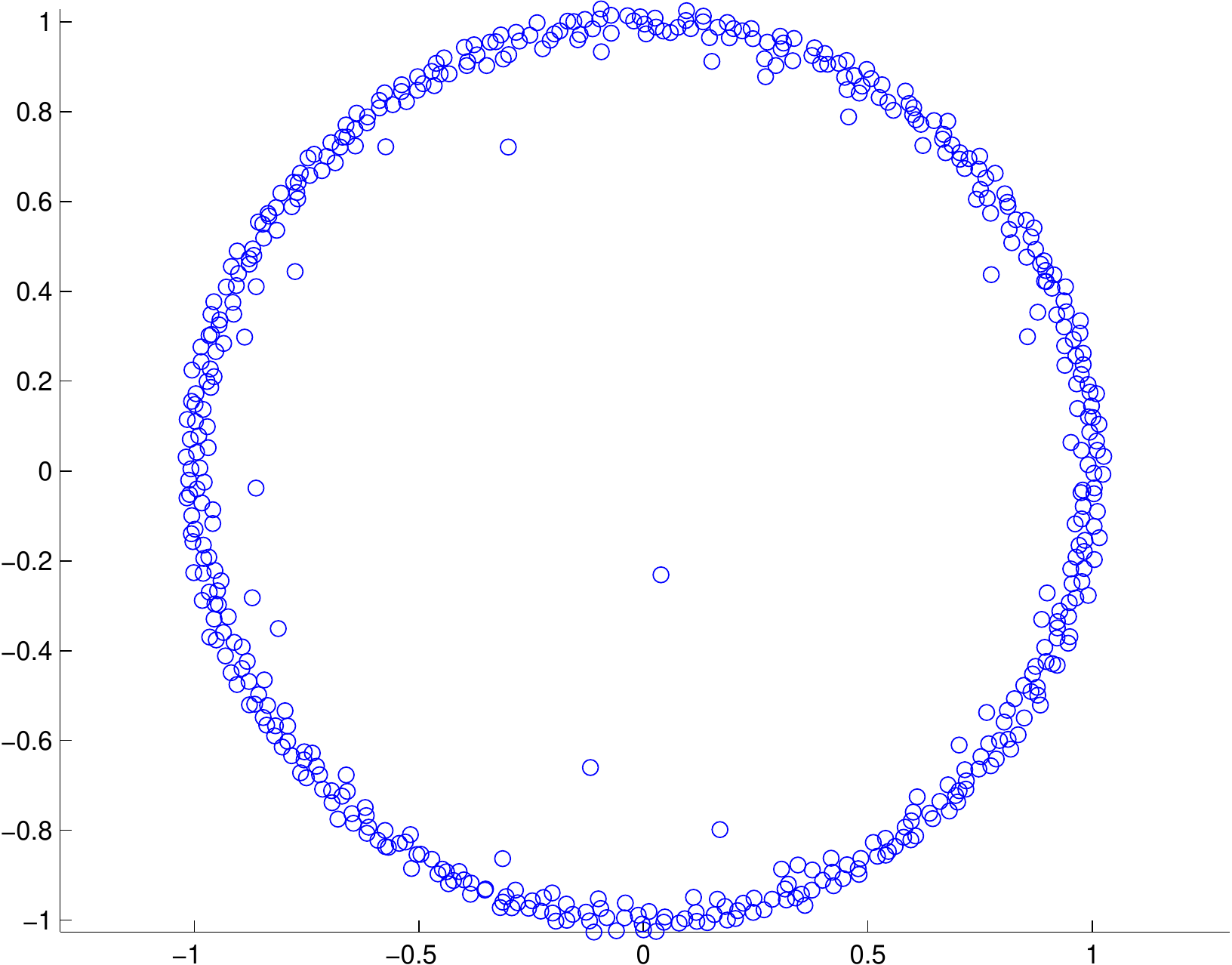}
 \end{minipage}
 \caption{On the left hand side $\delta=10^{-3}$ and 
  on the right hand side $\delta= 10^{-2}$.}
  \label{fig:e3_e2}
\end{figure}
\section{A general formula}
\setcounter{equation}{0}
To start with, we shall obtain a general formula (due to \cite{Vo14}
in a similar context). Our treatment is slightly different in that we
avoid the use of approximations of the delta function and also that we
have more holomorphy available.
\par
Let $g(z,Q)$ be a holomorphic function on $\Omega \times W\subset {\C}
\times {\C}^{N^2}$, where $\Omega\subset\C$, $W\subset {\C}^{N^2}$ 
are open bounded and connected. Assume that
\begin{equation}\label{ed.1} \hbox{for every }Q\in W,\
  g(\cdot ,Q)\not\equiv 0.
\end{equation}
To start with, we also assume that 
\begin{equation}\label{ed.2} 
\hbox{for almost all }Q\in W,\ g(\cdot ,Q) \hbox{ has only simple zeros.}
\end{equation}
Let $\phi \in C_0^\infty (\Omega )$ and let $m\in C_0(W)$. We are interested in 
\begin{equation}\label{ed.3}
K_\phi =\int \left(\sum_{z;\, g(z,Q)=0} \phi (z)\right) m(Q)L(dQ),
\end{equation}
where we frequently identify the Lebesgue measure with a differential
form,
\begin{equation*}
L(dQ)\simeq (2i)^{-{N^2}}d\overline{Q}_1\wedge dQ_1\wedge ... \wedge
d\overline{Q}_{N^2}\wedge dQ_{N^2} =:(2i)^{-{N^2}}d\overline{Q}\wedge dQ.
\end{equation*}
In (\ref{ed.3}) we
count the zeros of $g(\cdot ,Q)$ with their multiplicity and notice
that the integral is finite: For every compact set $K\subset W$ the
number of zeros of $g(\cdot ,Q)$ in $\mathrm{supp\,}\phi $, counted 
with their multiplicity, is uniformly bounded, for $Q\in K$. This 
follows from Jensen's formula.
\par 
Now assume,
\begin{equation}\label{ed.4}
g(z,Q)=0 \Rightarrow d_Qg\ne 0.
\end{equation}
Then 
$$
\Sigma :=\{(z,Q)\in \Omega \times W;\, g(z,Q)=0 \}
$$ 
is a smooth complex hypersurface in $\Omega \times W$ and from
(\ref{ed.2}) we see that 
\begin{equation}\label{ed.5}
K_\phi = \int_\Sigma \phi (z)m(Q)(2i)^{-{N^2}}d\overline{Q}\wedge dQ,
\end{equation}
where we view $(2i)^{-{N^2}}d\overline{Q}\wedge dQ$ as a complex $({N^2},{N^2})$-form on
$\Omega \times W$, restricted to $\Sigma $, which yields a non-negative differential form 
of maximal degree on $\Sigma$. 
\par 
Before continuing, let us eliminate the assumption
(\ref{ed.2}). Without that assumption, the integral in (\ref{ed.3}) is
still well-defined. It suffices to show \eqref{ed.5} for all 
$\phi\in\mathcal{C}_0^{\infty}(\Omega_0\times W_0)$ when $\Omega_0\times W_0$ 
is a sufficiently small open neighborhood of any given point 
$(z_0,Q_0)\in \Omega\times W$. When $g(z_0,Q_0)\neq 0$ or 
$\partial_z g(z_0,\Omega_0)\neq 0$ we already know that this holds, so we 
assume that for some $m\geq 2$, $\partial_z^kg(z_0,Q_0)=0$ for $0\leq k \leq m-1$, 
$\partial_z^mg(z_0,Q_0)\neq 0$. 
\par
Put $g_{\varepsilon}(z,Q)=g(z,Q)+\varepsilon$, $\varepsilon\in\mathrm{neigh}(0,\C)$. 
By Weierstrass' preparation theorem, if $\Omega_0,W_0$ and $r>0$ are small 
enough,
\begin{equation*}
 g_{\varepsilon}(z,Q) = k(z,Q,\varepsilon)p(z,Q,\varepsilon) \quad 
 \text{in } \Omega_0\times W_0\times D(0,r),
\end{equation*}
where $k$ is holomorphic and non-vanishing, and 
\begin{equation*}
 p(z,Q,\varepsilon)= z^m + p_1(Q,\varepsilon)z^{m-1} + \dots 
		     + p_m(Q,\varepsilon).
\end{equation*}
Here, $p_j(Q,\varepsilon)$ are holomorphic, and $p_j(0,0)=0$. 
\par
The discriminant $D(Q,\varepsilon)$ of the polynomial $p(\cdot,Q,\varepsilon)$ 
is holomorphic on $W_0\times D(0,r)$. It vanishes precisely when 
$p(\cdot,Q,\varepsilon)$ - or equivalently $g_{\varepsilon}(\cdot,Q)$ - has a 
multiple root in $\Omega_0$.
\par
Now for $0 < |\varepsilon| \ll 1$, the $m$ roots of $g_{\varepsilon}(\cdot,Q_0)$ 
are simple, so $D(Q_0,\varepsilon)\neq 0$. Thus, $D(\cdot,\varepsilon)$ is not 
identically zero, so the zero set of $D(\cdot,\varepsilon)$ in $W_0$ is of measure 
$0$ (assuming that we have chosen $W_0$ connected). This means that for 
$0<|\varepsilon|\ll 1$, the function $g_{\varepsilon}(\cdot,Q)$ has only simple 
roots in $\Omega$ for almost all $Q\in W_0$.
\par
Let $\Sigma _\epsilon $ be the zero set of
$g_\epsilon $, so that $\Sigma _\epsilon \to \Sigma $ in the natural
sense. We have 
$$
\int \left(\sum_{z;\, g_\epsilon (z,Q)=0 }\phi
(z)\right) m(Q)(2i)^{-{N^2}}d\overline{Q}\wedge dQ=\int_{\Sigma _\epsilon }\phi
(z)m(Q)(2i)^{-{N^2}}d\overline{Q}\wedge dQ
$$ 
for $\phi\in\mathcal{C}_0^{\infty}(\Omega_0\times W_0)$, 
when $\epsilon >0$ is small enough, depending on $\phi $, $m$. Passing
to the limit $\epsilon =0$ we get (\ref{ed.5}) under the assumptions
(\ref{ed.1}), (\ref{ed.4}), first for 
$\phi\in\mathcal{C}_0^{\infty}(\Omega_0\times W_0)$, and then by partition 
of unity for all $\phi\in\mathcal{C}_0^{\infty}(\Omega\times W)$. Notice that 
the result remains valid if we replace $m(Q)$ by $m(Q)1_B(Q)$ where $B$ is 
a ball in $W$.
\par
Now we strengthen the assumption (\ref{ed.4}) by assuming that we have
a non-zero $Z(z)\in {\C}^{N^2}$ depending smoothly on $z\in \Omega $
(the dependence will actually be holomorphic in the application below) 
such that
\begin{equation}\label{ed.6}
g(z,Q)=0 \Rightarrow \left(\overline{Z}(z)\cdot \partial _Q
\right)g(z,Q)\ne 0.
\end{equation}
We have the corresponding orthogonal decomposition
$$
Q=Q(\alpha )=\alpha _1\overline{Z}(z)+\alpha ',\quad \alpha '\in
\overline{Z}(z) ^\perp , \ \alpha_1\in\C,
$$ 
and if we identify unitarily $\overline{Z}(z)^\perp$ with ${\C}^{{N^2}-1}$ 
by means of an orthonormal basis $e_2(z),...,e_{N^2}(z)$, so
that $\alpha '=\sum_2^{N^2} \alpha _je_j(z)$ we get global coordinates
$\alpha _1,\alpha _2,...,\alpha _{N^2}$ on $Q$-space.
\par 
By the implicit function theorem, at least locally near any given
point in $\Sigma $, we can represent $\Sigma $ by $\alpha
_1=f(z,\alpha ')$, $\alpha '\in \overline{Z}(z)^\perp\simeq {\C}^{{N^2}-1}$, where $f$ is
smooth. (In the specific situation below, this will be valid
globally.) 
Clearly, since $z,\alpha _2,...,\alpha _{N^2}$ are complex coordinates on
$\Sigma $, we have on $\Sigma$ that
\begin{equation*}
  \frac{1}{(2i)^{N^2}}d\overline{Q}\wedge dQ
  =
  J(f)\frac{d\overline{z}\wedge dz}{2i} 
  (2i)^{1-N^2}d\overline{\alpha}_2\wedge d\alpha_2 \wedge ... 
  \wedge d\overline{\alpha}_{N^2}\wedge d\alpha_{N^2}
\end{equation*}
with the convention that 
\begin{equation*}
 J(f)\frac{d\overline{z}\wedge dz}{2i}  \geq 0, \quad
 (2i)^{1-N^2}d\overline{\alpha}_2\wedge d\alpha_2 \wedge ... 
\wedge d\overline{\alpha}_{N^2}\wedge d\alpha_{N^2} >0.
\end{equation*}
Thus 
\begin{equation}\label{ed.7}
\begin{split}
K_\phi =\int \phi (z)m\left(f(z,\alpha ')\overline{Z}(z)+\alpha
  '\right)J(f)(z,\alpha _2,...,\alpha _{N^2}) \times \\
(2i)^{-{N^2}}d\overline{z}\wedge dz\wedge d\overline{\alpha}_2\wedge d
\alpha_2 \wedge ... \wedge d\overline{\alpha}_{N^2}\wedge
d\alpha_{N^2} .
\end{split}
\end{equation}
The Jacobian $J(f)$ is invariant under any $z$-dependent unitary change
of variables, $\alpha _2,...,\alpha _{N^2}\mapsto \widetilde{\alpha
}_2,...,\widetilde{\alpha }_{N^2}$, so for the calculation of $J(f)$ at a
given point $(z_0,\alpha _0')$, we are free to choose the most
appropriate orthonormal basis $e_2(z),...,e_{N^2}(z)$ in
$\overline{Z}(z)^\perp$ depending smoothly on $z$. We write
(\ref{ed.7}) as 
\begin{equation}\label{ed.8}
K_\phi =\int \phi (z) \Xi (z) \frac{d\overline{z}\wedge dz}{2i},
\end{equation}
where the density $\Xi (z)$ is given by 
\begin{equation}\label{ed.9}\begin{split}
\Xi (z)=\int_{\alpha '=\sum_2^{N^2} \alpha _je_j(z)}m(f(z,\alpha
')\overline{Z}(z)+\alpha ')J(f)(z,\alpha _2,...,\alpha _{N^2})\times \\
(2i)^{1-{N^2}}d\overline{\alpha}_2\wedge d\alpha_2\wedge
... \wedge d\overline{\alpha}_{N^2}\wedge d\alpha_{N^2}.
\end{split}
\end{equation}
Before continuing, let us give a brief overview on the organization 
of following sections: 
\par
In Section \ref{gru} we will set up an auxiliary Grushin problem 
yielding the effective function $g$ as above. Section \ref{chvar} 
deals with the appropriate choice of coordinates $Q$ and the 
calculation of the Jacobian $J(f)$. Finally, in Section \ref{pfThm} 
we complete the proof of Theorem \ref{ed5}.

\section{Grushin problem for the perturbed Jordan block}\label{gru}
\subsection{Setting up an auxiliary problem}
Following \cite{SjZw07}, we introduce an auxiliary Grushin problem.
Define $R_+:{\C}^N\to {\C}$ by
\begin{equation}\label{pj.5}
R_+u=u_1,\ u=(u_1\ ...\ u_N)^\mathrm{t}\in {\C}^N. 
\end{equation} 
Let $R_-:\, {\C}\to {\C}^N$ be defined by
\begin{equation}\label{pj.6}
R_-u_-=(0\ 0\ ...\ u_-)^\mathrm{t}\in {\C}^N.
\end{equation}
Here, we identify vectors in ${\C}^N$ with column matrices. Then
for $|z|<1$, the operator
\begin{equation}\label{pj.7}
{\mathcal{A}}_0=\begin{pmatrix}A_0-z &R_-\\ R_+ &0\end{pmatrix}: 
{\C}^{N+1}\to {\C}^{N+1}
\end{equation}
is bijective. In fact, identifying 
\begin{equation*}
{\C}^{N+1}\simeq
\ell^2([1,2,...,N+1])\simeq\ell^2({\Z}/(N+1){\Z}), 
\end{equation*}
we have
${\mathcal{A}}_0=\tau ^{-1}-z\Pi _N$, where $\tau u(j)=u(j-1)$ (translation
by 1 step to the right) and $\Pi _Nu=1_{[1,N]}u$. Then ${\mathcal{A}}_0=\tau
^{-1}(1-z\tau \Pi _N)$, $(\tau \Pi _N)^{N+1}=0$,
$$
{\mathcal{A}}_0^{-1}=(1+z\tau \Pi _N+(z\tau \Pi _N)^2+...+(z\tau \Pi
_N)^N)\circ \tau .
$$
Write
$$
{\mathcal{E}}_0:={\mathcal{A}}_0^{-1}=:\begin{pmatrix}E^0 &E_+^0\\
E_-^0 &E_{-+}^0
\end{pmatrix}.
$$
Then 
\begin{equation}\label{pj.8}
E^0\simeq \Pi _N(1+z\tau \Pi _N+...(z\tau \Pi _N)^{N-1})\tau \Pi _N,
\end{equation}
\begin{equation}\label{pj.9}
E_+^0=\begin{pmatrix}1\\z\\ ..\\z^{N-1}\end{pmatrix},\
E_-^0=\begin{pmatrix}z^{N-1} & z^{N-2} &... &1\end{pmatrix},
\end{equation}
\begin{equation}\label{pj.10}
E_{-+}^0=z^N.
\end{equation}
A quick way to check \eqref{pj.9}, \eqref{pj.10} is to write 
$\mathcal{A}_0$ as an $(N+1)\times (N+1)$-matrix where we moved 
the last line to the top, with the lines labeled from 
$0$ ($\equiv N+1 \mod (N+1)\Z$) to $N$ and the columns from 
$1$ to $N+1$.
\par
Continuing, we see that
\begin{equation}\label{pj.10.2}
\Vert E^0\Vert\le G(|z|),\ \Vert E_\pm^0\Vert\le
G(|z|)^{\frac{1}{2}},\ \Vert E_{-+}^0\Vert\le 1 ,
\end{equation}
where $\Vert \cdot \Vert$ denote the natural operator norms and
\begin{equation}\label{pj.10.4}
G(|z|):=\min \left( N,\frac{1}{1-|z|} \right)\asymp
1+|z|+|z|^2+...+|z|^{N-1} .
\end{equation}
Next, consider the natural Grushin problem for $A_\delta $. If
$\delta \Vert Q\Vert G(|z|)<1$, we see that
\begin{equation}\label{pj.11}
{\mathcal{A}}_\delta =\begin{pmatrix}A_\delta -z  &R_-\\ R_+ &0\end{pmatrix}
\end{equation}
is bijective with inverse $${\mathcal{E}}_\delta =\begin{pmatrix}E^\delta
  & E_+^\delta \\ E_+^\delta  &E_{-+}^\delta \end{pmatrix},$$
where
\begin{equation}\label{pj.11.5}
\begin{split}
  E^\delta =&E^0-E^0\delta QE^0+E^0(\delta QE^0)^2-...=E^0(1+\delta
  QE^0)^{-1},\\
  E_+^\delta =&E_+^0-E^0\delta QE_+^0+(E^0\delta
  Q)^2E_+^0-...=(1+E^0\delta
  Q)^{-1}E^0_+,\\
  E_-^\delta =&E_-^0-E_-^0\delta QE^0+E_-^0(\delta
  QE^0)^2-...=E_-^0(1+\delta
  QE^0)^{-1},\\
  E^\delta _{-+}=&E^0_{-+}-E_-^0\delta QE_+^0+E_-^0\delta QE^0\delta
  QE_+^0-...\\
  &=E_{-+}^0-E_-^0\delta Q(1+E^0\delta Q)^{-1}E_+^0.
\end{split}
\end{equation}
We get
\begin{equation}\label{pj.12}
\begin{split}
&\Vert E^\delta \Vert\le \frac{G(|z|)}{1-\delta \Vert Q\Vert G(|z|)},\
\Vert E_\pm^\delta \Vert\le \frac{G(|z|)^{\frac{1}{2}}}{1-\delta \Vert
  Q\Vert G(|z|)},\\
&| E_{-+}^\delta - E_{-+}^0 |\le \frac{\delta \Vert Q\Vert G(|z|)}{1-\delta \Vert Q\Vert G(|z|)}.
\end{split}
\end{equation}
Indicating derivatives with respect to $\delta $ with dots and
omitting sometimes the super/sub-script $\delta $, we have
\begin{equation}\label{pj.13}
\dot{{\mathcal{E}}}=-{\mathcal{E}}\dot{{\mathcal{A}}}{\mathcal{E}}=-\begin{pmatrix}EQE &
  EQE_+\\ E_-QE &E_-QE_+.
\end{pmatrix}
\end{equation}
Integrating this from 0 to $\delta $ yields
\begin{equation}\label{pj.14}
\Vert  E^\delta -E^0\Vert \le \frac{G(|z|)^2 \delta \Vert Q \Vert}{(1-\delta \Vert Q\Vert
    G(|z|))^2} ,\quad 
    \Vert  E_\pm^\delta -E_\pm^0\Vert \le 
    \frac{G(|z|)^{\frac{3}{2}}\delta \Vert Q \Vert}{(1-\delta \Vert Q\Vert G(|z|))^2}.
\end{equation}
We now sharpen the assumption that $\delta \Vert Q\Vert G(|z|)<1$ to 
\begin{equation}\label{pj.15}
\delta \Vert Q\Vert G(|z|)<1/2. 
\end{equation}
Then 
\begin{equation}\label{pj.16}
\begin{split}
&\Vert E^\delta \Vert\le 2G(|z|),\
\Vert E_\pm^\delta \Vert\le 2 G(|z|)^{\frac{1}{2}},\\
&| E_{-+}^\delta - E_{-+}^0 |\le 2\delta \Vert Q\Vert G(|z|).
\end{split}
\end{equation}
Combining this with the identity $\dot{E}_{-+}=-E_-QE_+$ that follows
from (\ref{pj.13}), we get
\begin{equation}\label{pj.17}
\Vert \dot{E}_{-+}+E_-^0QE_+^0\Vert \le 16 G(|z|)^2\delta
  \Vert Q\Vert ^2,
\end{equation}
and after integration from $0$ to $\delta $,
\begin{equation}\label{pj.18}
E_{-+}^\delta =E_{-+}^0-\delta E_-^0QE_+^0 +{\mO}(1)G(|z|)^2(\delta
\Vert Q\Vert)^2 . 
\end{equation}
Using (\ref{pj.9}), (\ref{pj.10}) we get with $Q=(q_{j,k})$,
\begin{equation}\label{pj.19}
E_{-+}^\delta =z^N-\delta \sum_{j,k=1}^N q_{j,k}z^{N-j+k-1}
  +{\mO}(1)G(|z|)^2(\delta
\Vert Q\Vert)^2,
\end{equation}
still under the assumption (\ref{pj.15}).
\subsection{Estimates for the effective Hamiltonian}

\par We now consider the situation at the beginning of Section
\ref{mres}:
$$
A_\delta =A_0+\delta Q,\ Q=(q_{j,k}(\omega ))_{j,k=1}^N,\
q_{j,k}(\omega )\sim {\mathcal{N}}_{\C}(0,1)\hbox{ independent.}
$$
In the following, we often write $|\cdot|$ for the Hilbert-Schmidt norm 
$\lVert\cdot\rVert_{\mathrm{HS}}$. As we recalled in (\ref{pj.28}), we have 
\begin{equation}\label{ed.10}
|Q|\le C_1N\hbox{ with probability }\ge 1-e^{-N^2},
\end{equation}
and we shall work under the assumption that $|Q|\le C_1N$.
We let $|z|<1$ and assume:
\begin{equation}\label{ed.11}
\delta NG(|z|)\ll 1.
\end{equation}
Then with probability $\ge 1-e^{-N^2}$, we have (\ref{pj.15}),
(\ref{pj.19}) which give for $g(z,Q):=E_{-+}^\delta $, 
\begin{equation}\label{ed.12}
g(z,Q)=z^N+\delta (Q|\overline{Z}(z))+{\mO}(1)(G(|z|)\delta N)^2.
\end{equation}
Here, $Z$ is given by
\begin{equation}\label{ed.13}
Z=\left( z^{N-j+k-1}\right)_{j,k=1}^N.
\end{equation}
A straight forward calculation shows that
\begin{equation}\label{ed.14}
  | Z | =\sum_0^{N-1}|z|^{2\nu }=\frac{1-|z|^{2N}}{1-|z|^2}=
\frac{1-|z|^N}{1-|z|}\,\,\frac{1+|z|^N}{1+|z|},
\end{equation}
and in particular,
\begin{equation}\label{ed.15}
\frac{G(|z|)}{2}\le | Z| \le G(|z|).
\end{equation}
\par 
The middle term in (\ref{ed.12}) is bounded in modulus by $\delta
|Q||Z|\le \delta C_1NG(|z|)$ and we assume that $|z|^N$ is much
smaller than this bound:
\begin{equation}\label{ed.16}
|z|^N\ll \delta C_1NG(|z|).
\end{equation}
More precisely, we work in a disc $D(0,r_0)$, where 
\begin{equation}\label{ed.25}
r_0^N\le C^{-1}\delta C_1NG(r_0)\le C^{-2},\quad r_0\le 1-N^{-1}
\end{equation}
and $C\gg 1$. In fact, the first inequality in \eqref{ed.25} 
can be written $m(r_0) \leq C^{-1}\delta C_1 N$ and 
$m(r)= r^N(1-r)$ is increasing on $[0,1-N^{-1}]$ so the inequality 
is preserved if we replace $r_0$ by $|z|\leq r_0$. Similarly, the 
second inequality holds after the same replacement since $G$ is 
increasing. 
\par 
In view of (\ref{ed.11}), we see that
$$
\left(G(|z|)\delta N \right)^2\ll \delta G(|z|)N
$$
is also much smaller than the upper bound on the middle term. 

\par By the Cauchy inequalities,
\begin{equation}\label{ed.17}
d_Qg=\delta Z\cdot dQ+{\mO}(1)G(|z|)^2\delta ^2N.
\end{equation}
The norm of the first term is $\asymp \delta G\gg G^2\delta ^2N$,
since $G\delta N\ll 1$. (When applying the Cauchy inequalities, we
should shrink the radius $R=C_1N$ by a factor $\theta <1$, but we have
room for that, if we let $C_1$ be a little larger than necessary to
start with.)

Writing 
$$
Q=\alpha _1\overline{Z}(z)+\alpha ',\ \alpha '\in
\overline{Z}(z)^\perp\simeq {\C}^{{N^2}-1},
$$
we identify $g(z,Q)$ with a function $\widetilde{g}(z,\alpha )$ which is
holomorphic in $\alpha $ for every fixed $z$ and satisfies
\begin{equation}\label{ed.18}
\widetilde{g}(z,\alpha )=z^N+\delta |Z(z)|^2\alpha _1+{\mO}(1)
G(|z|)^2\delta ^2N^2,
\end{equation}
while (\ref{ed.17}) gives
\begin{equation}\label{ed.19}
\partial _{\alpha _1}\widetilde{g}(z,\alpha )=\delta |Z(z)|^2
+{\mO}(1)G(|z|)^3\delta ^2N,
\end{equation}
and in particular,
$$
\left| \partial _{\alpha _1}\widetilde{g} \right|\asymp \delta G(|z|)^2.
$$
This derivative does not depend on the choice of unitary
identification $\overline{Z}^\perp\simeq {\C}^{{N^2}-1}$. Notice that
the remainder in (\ref{ed.18}) is the same as in (\ref{ed.12}) and
hence a holomorphic function of $(z,Q)$. In particular it is a
holomorphic function of $\alpha _1,...,\alpha _{N^2}$ for every fixed $z$
and we can also get (\ref{ed.19}) from this and the Cauchy
inequalities. In the same way, we get from (\ref{ed.18}) that
\begin{equation}\label{ed.19.5}
\partial _{\alpha _j}\widetilde{g}(z,\alpha )={\mO}(1)G(|z|)^2\delta ^2N,\ j=2,...,{N^2}.
\end{equation}
\par 
The Cauchy inequalities applied to (\ref{ed.12}) give,
\begin{equation}\label{ed.26}
\partial _zg(z,Q)=Nz^{N-1}+\delta Q\cdot \partial _zZ(z)
+{\mO}(1)\frac{(G(|z|)\delta N)^2}{r_0-|z|}.
\end{equation}
Then, for $\widetilde{g}(z,\alpha _1,\alpha ')=g(z,\alpha
_1\overline{Z}(z)+\alpha ')$, $\alpha '= \sum_2^{N^2} \alpha _je_j$ 
we shall see that
\begin{equation}\label{ed.27}\begin{split}
\partial _z\widetilde{g}=
 Nz^{N-1}+\delta \alpha _1\partial _z\left( |Z|^2 \right)
 +{\mO}(1)\frac{(G\delta N)^2}{r_0-|z|}+{\mO}(1)G^2\delta
^2N\left| \sum_2^{N^2} \alpha _j\partial _ze_j \right|,
\end{split}
\end{equation}
\begin{equation}\label{ed.28}\begin{split}
\partial _{\overline{z}}\widetilde{g}
=& \delta \alpha _1\partial _{\overline{z}}\left( |Z|^2 \right) 
+{\mO}(1)G^2\delta ^2N\left| \alpha _1\overline{\partial
    _zZ}+\sum_2^{N^2} \alpha _j\partial _{\overline{z}}e_j \right| .
\end{split}\end{equation}
\par 
The leading terms in (\ref{ed.27}), (\ref{ed.28}) can be obtained
formally from (\ref{ed.18}) by applying $\partial _z$, $\partial
_{\overline{z}}$ and we also notice that 
$$
\partial _z|Z|^2=\overline{Z}\cdot \partial _zZ,\ \
\partial _{\overline{z}}|Z|^2=Z\cdot \overline{\partial _zZ }.
$$
However it is not clear how to handle the remainder in (\ref{ed.18}),
so we verify (\ref{ed.27}), (\ref{ed.28}), using (\ref{ed.17}),
(\ref{ed.26}):
\[
\begin{split}
&\partial _z\widetilde{g}=\partial _zg+d_Qg\cdot \sum_2^{N^2} \alpha
_j\partial _ze_j=\\
&Nz^{N-1}+\delta Q\cdot \partial _zZ+{\mO}(1)\frac{(G\delta
  N)^2}{r_0-|z|}+(\delta Z\cdot dQ +{\mO}(1)G^2\delta ^2N)\cdot
\sum_2^{N^2} \alpha _j\partial _ze_j\\
&= Nz^{N-1}+\delta \alpha _1\partial _z\left(|Z|^2 \right) +\delta
\sum_2^{N^2} \alpha _je_j\cdot \partial _zZ +\delta Z\cdot \sum_2^{N^2}\alpha
_j\partial _ze_j\\
&\hskip 5cm +\hbox{ the remainders in (\ref{ed.27})}.
\end{split}
\]
The 3d and the 4th terms in the last expression add up to 
$$
\delta \partial _z\left(\sum_2^{N^2} \alpha _je_j\cdot Z
\right)=\delta \partial _z(0)=0,
$$
and we get (\ref{ed.27}).

\par Similarly,
\[
\begin{split}
&\partial _{\overline{z}}\widetilde{g}=d_Qg\cdot \left(\alpha
  _1\overline{\partial _zZ}+\sum_2^{N^2} \alpha _j\partial
  _{\overline{z}}e_j \right)\\
&=\left(\delta Z\cdot dQ+{\mO}(1)G^2\delta ^2N  \right)\cdot \left(\alpha
_1\overline{\partial _zZ}+\sum_2^{N^2} \alpha _j\partial
_{\overline{z}}e_j\right) .
\end{split}
\]
Up to remainders as in (\ref{ed.28}), this is equal to 
\begin{align*}
\delta \alpha _1Z\cdot \overline{\partial _zZ}+\delta \sum_2^{N^2} \alpha
_jZ\cdot \partial _{\overline{z}}e_j &=
\delta \alpha _1\partial _{\overline{z}}\left(|Z|^2 \right) +\delta
\sum_2^{N^2} \alpha _j\partial _{\overline{z}}\left(Z\cdot e_j
\right)
 \notag \\
&=\delta \alpha _1 \partial _{\overline{z}}\left(|Z|^2 \right).
\end{align*}
\par 
Here, we know that 
$$
|Z(z)|=\sum_0^{N-1}(z\overline{z})^\nu =:K(z\overline{z}),
$$
\begin{equation}\label{ed.29}
\begin{split}
\partial _z\left(|Z(z)|^2 \right)&=2KK'\overline{z},\\
\partial _{\overline{z}}\left(|Z(z)|^2 \right)&=2KK'z.
\end{split}
\end{equation}
Observe also that $K(t)\asymp G(t)$ and that $G(|z|)\asymp G(|z|^2)$.

The following result implies that $K'(t)$ and $K(t)^2$ are of the same
order of magnitude.
\begin{prop}\label{ed1}
  For $k\in {\N}$, $2\le N\in {\N}\cup \{+\infty \}$, $0\le t<
  1$, we put
\begin{equation}\label{ed.29.1}
M_{N,k}(t)=\sum _{\nu =1}^{N-1} \nu ^kt^\nu ,
\end{equation}
so that $K(t)=K_N(t)=M_{N,0}(t)+1$, $K'(t)\asymp M_{N-1,1}(t)+1$. For each fixed $k\in {\N}$, we
have uniformly with respect to $N$, $t$:
\begin{equation}\label{ed.29.2}
M_{\infty ,k}(t)\asymp \frac{t}{(1-t)^{k+1}},
\end{equation}
\begin{equation}\label{ed.29.3}
M_{\infty ,k}(t)-M_{N,k}(t)\asymp \frac{t^N}{1-t}\left(N+\frac{1}{1-t} \right)^k.
\end{equation}
For all fixed $C>0$ and $k\in {\N}$, we have uniformly,
\begin{equation}\label{ed.29.4}
M_{N,k}(t)\asymp M_{\infty ,k}(t),\hbox{ for }0\le t\le
1-\frac{1}{CN},\ N\ge 2.
\end{equation}
\end{prop}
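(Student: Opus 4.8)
\emph{Proof strategy.} The plan is to establish the three displays \eqref{ed.29.2}, \eqref{ed.29.3}, \eqref{ed.29.4} in turn, reducing everything to the closed form of the full series $M_{\infty,k}$. For \eqref{ed.29.2} I would observe that $M_{\infty,k}(t)=(t\partial_t)^k\frac{1}{1-t}$ for $k\ge 1$ (the case $k=0$ being the elementary identity $M_{\infty,0}(t)=t/(1-t)$), and prove by induction on $k$ that $(t\partial_t)^k\frac{1}{1-t}=A_k(t)(1-t)^{-(k+1)}$, where $A_k$ is a polynomial with non-negative coefficients satisfying $A_k(0)=0$, with coefficient of $t$ equal to $1$, and with $A_k(1)=k!$; the induction step is the one-line computation $A_{k+1}(t)=t\big[(1-t)A_k'(t)+(k+1)A_k(t)\big]$. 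Since a polynomial with non-negative coefficients is non-decreasing on $[0,1]$, the function $A_k(t)/t$ takes values in $[1,k!]$ there, hence $t\le A_k(t)\le k!\,t$ and $M_{\infty,k}(t)\asymp t(1-t)^{-(k+1)}$ with constants depending only on $k$. (One could instead compare the series with $\int_0^\infty x^kt^x\,dx=k!\,|\log t|^{-(k+1)}$ and use $|\log t|\asymp 1-t$ for $t$ bounded away from $0$, treating $t\le 1/2$ separately; but the generating-function argument is shorter.)

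For \eqref{ed.29.3} I would write the tail as $M_{\infty,k}(t)-M_{N,k}(t)=\sum_{\nu\ge N}\nu^kt^\nu=t^N\sum_{\mu\ge 0}(N+\mu)^kt^\mu$, invoke the elementary two-sided bound $(N+\mu)^k\asymp N^k+\mu^k$ (trivial for $k=0$, and for $k\ge 1$ a consequence of the binomial expansion together with convexity of $x\mapsto x^k$), and then sum term by term using \eqref{ed.29.2} and the geometric series to obtain
\[
\sum_{\mu\ge0}(N+\mu)^kt^\mu\asymp \frac{N^k}{1-t}+\frac{t}{(1-t)^{k+1}}=\frac{1}{1-t}\Big(N^k+\frac{t}{(1-t)^k}\Big).
\]
There remains the identification of $N^k+\frac{t}{(1-t)^k}$ with $\big(N+\frac{1}{1-t}\big)^k$: since $(a+b)^k\asymp a^k+b^k$ for $a,b\ge 0$, this amounts to $N^k+\frac{t}{(1-t)^k}\asymp N^k+\frac{1}{(1-t)^k}$, which holds for $t\ge 1/2$ because then $t\asymp 1$, and for $t\le 1/2$ because both sides are $\asymp N^k$ once one uses $N\ge 2$.

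For \eqref{ed.29.4} the inequality $M_{N,k}\le M_{\infty,k}$ is automatic, so only a matching lower bound is needed. When $t\le 1/2$ it is immediate, since $M_{N,k}(t)\ge t$ while $M_{\infty,k}(t)\asymp t$. When $1/2\le t\le 1-\tfrac{1}{CN}$, put $s=1-t\in[\tfrac{1}{CN},\tfrac12]$ and use the elementary facts that $(1-s)^{\nu}\ge(1-s)^{1/s}\ge \tfrac14$ whenever $\nu\le 1/s$, and $\sum_{\nu=1}^m\nu^k\asymp m^{k+1}$. If $N-1\ge 1/s$, then keeping only the terms with $\nu\le\lfloor 1/s\rfloor$ gives $M_{N,k}(t)\ge\tfrac14\sum_{\nu=1}^{\lfloor 1/s\rfloor}\nu^k\gtrsim s^{-(k+1)}\asymp M_{\infty,k}(t)$. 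If instead $N-1< 1/s$, then every term of $M_{N,k}$ has $\nu\le 1/s$, so $M_{N,k}(t)\ge\tfrac14\sum_{\nu=1}^{N-1}\nu^k\gtrsim N^{k+1}$, while $1/s\le CN$ forces $M_{\infty,k}(t)\lesssim s^{-(k+1)}\le(CN)^{k+1}$, whence $M_{N,k}(t)\gtrsim C^{-(k+1)}M_{\infty,k}(t)$.

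The point requiring the most care is this last sub-case: one has to accept that the implied constants in \eqref{ed.29.4} genuinely deteriorate with $C$ (like $C^{-(k+1)}$), and then invoke the hypothesis $s\ge \frac{1}{CN}$ precisely in order to bound $1/s$ from above by $CN$ when $N$ happens to be small compared with $1/s$. Everything else is routine bookkeeping with elementary inequalities; the only other mildly delicate point is the small-$t$ versus large-$t$ split used to pass between $N^k+\frac{t}{(1-t)^k}$ and $\big(N+\frac{1}{1-t}\big)^k$ in the proof of \eqref{ed.29.3}.
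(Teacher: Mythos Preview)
Your proposal is correct and complete, but the route is genuinely different from the paper's.

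The paper treats \eqref{ed.29.2} and \eqref{ed.29.3} simultaneously by a single tail estimate: writing $t=e^{-s}$ with $s\asymp 1-t$, it sets $P_{N,k}(s)=\sum_{\nu\ge N}\nu^k e^{-\nu s}$, regroups this sum into blocks of $\asymp 1/s$ consecutive terms on which $e^{-\nu s}$ has constant order of magnitude, and reads off $P_{N,k}(s)\asymp \frac{e^{-Ns}}{s}\big(N+\tfrac{1}{s}\big)^k$; then $N=1$ gives \eqref{ed.29.2} and $N\ge 2$ gives \eqref{ed.29.3}. For \eqref{ed.29.4} the paper argues indirectly: it shows that for $t\le 1-C/N$ with $C$ large the tail \eqref{ed.29.3} is negligible compared with the full series \eqref{ed.29.2}, and then covers the remaining strip $1-C/N\le t\le 1-\tfrac{1}{\mO(N)}$ by observing that both $M_{N,k}$ and $M_{\infty,k}$ are $\asymp N^{k+1}$ there. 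Your approach is instead purely algebraic for \eqref{ed.29.2} (the Eulerian-polynomial identity $(t\partial_t)^k\tfrac{1}{1-t}=A_k(t)(1-t)^{-(k+1)}$ with $t\le A_k(t)\le k!\,t$), uses a shift-and-binomial splitting for \eqref{ed.29.3}, and for \eqref{ed.29.4} gives a direct lower bound by retaining only the terms with $\nu\le 1/(1-t)$, where $t^\nu\ge 1/4$. The paper's block decomposition is more unified (one computation yields both asymptotics), while your argument is more elementary, avoids the change of variables $t=e^{-s}$, and makes the $C$-dependence of the constants in \eqref{ed.29.4} explicit (namely $C^{-(k+1)}$). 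One small cosmetic point: in your derivation of \eqref{ed.29.3} the step ``sum term by term using \eqref{ed.29.2}'' applies verbatim only for $k\ge 1$, since $\sum_{\mu\ge 0}\mu^0 t^\mu=\tfrac{1}{1-t}$ rather than $M_{\infty,0}(t)$; but the case $k=0$ is the closed geometric identity $\sum_{\nu\ge N}t^\nu=t^N/(1-t)$, so nothing is lost.
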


Notice that under the assumption in (\ref{ed.29.4}), the estimate
(\ref{ed.29.3}) becomes
$$
M_{\infty ,k}(t)-M_{N,k}(t)\asymp \frac{t^NN^k}{1-t}.
$$
We also see that in any region $1-{\mO}(1)/N\le t<1$, we have
$$
M_{N,k}(t)\asymp N^{k+1},
$$
so together with (\ref{ed.29.4}), \eqref{ed.29.2}, this shows that
\begin{equation}\label{ed.29.4.5}
M_{N,k}(t)\asymp t\min \left(\frac{1}{1-t},N \right)^{k+1}.
\end{equation}
\begin{proof}
The statements are easy to verify when $0\le t\le 1-1/{\mO}(1)$ and the
$N$-dependent statements (\ref{ed.29.3}), (\ref{ed.29.4}) are clearly
true when $N\le {\mO}(1)$. Thus we can assume that $1/2\le t<1$ and
$N\gg 1$. 

\par Write $t=e^{-s}$ so that $0<s\le 1/{\mO}(1)$ and notice
that $s\asymp 1-t$. For $N\in {\N}$, we put
\begin{equation}\label{ed.29.5}
P_{N,k}(s)=\sum_{\nu =N}^\infty \nu ^ke^{-\nu s},
\end{equation}
so that
\begin{equation}\label{ed.29.6}
P_{N,k}(s)=\begin{cases}M_{\infty ,k}(t)\hbox{ when }N=1,\\
M_{\infty ,k}(t)-M_{N,k}(t)\hbox{ when }N\ge 2.
\end{cases}
\end{equation}
We regroup the terms in (\ref{ed.29.5}) into sums with $\asymp 1/s$
terms where $e^{-\nu s}$ has constant order of magnitude:
$$
P_{N,k}(s)=\sum_{\mu =1}^{\infty }\Sigma (\mu ),\quad \Sigma (\mu
)=\sum_{N+\frac{\mu -1}{s}\le\nu < N+\frac{\mu }{s}}\nu ^ke^{-\nu s}.
$$
Here, since the sum $\Sigma (\mu )$ consists of $\asymp 1/s$ terms of the order
$\nu ^k e^{-(Ns+\mu )} $,
$$
\Sigma (\mu )\asymp e^{-(Ns+\mu )}\sum _{N+\frac{\mu -1}{s}\le\nu <
  N+\frac{\mu }{s}}\nu ^k \asymp e^{-(Ns+\mu )}\frac{(Ns+\mu )^k}{s^{k+1}}.
$$
Hence,
\[\begin{split}
  P_{N,k}(s)&\asymp \frac{e^{-Ns}}{s^{k+1}}\sum_{\mu =1}^\infty e^{-\mu
  }
  (Ns+\mu )^k\\
  &\asymp \frac{e^{-Ns}}{s^{k+1}}
  (Ns+1)^k=\frac{e^{-Ns}}{s}\left(N+\frac{1}{s}\right)^k.
\end{split}\]
Recalling (\ref{ed.29.6}) and the fact that $s\asymp 1-t$, $1/2\le
t<1$, we get
(\ref{ed.29.2}) when $N=1$ and (\ref{ed.29.3}) when $N\ge 2$. 

\par It remains to show (\ref{ed.29.4}) and it suffices to do so for
$1/2\le t\le 1-C/N$, $N\gg 1$ and for $C\ge 1$ sufficiently large but
independent of $N$. Indeed, for $1-C/N\le t\le 1-1/{\mO}(N)$, both
$M_{N,k}(t)$ and $M_{\infty ,k}(t)$ are $\asymp N^{1+k}$. We can also
exclude the case $k=0$ where we have explicit formulae. 

\par To get the equivalence (\ref{ed.29.4}) for $1/2\le t\le 1-C/N$,
$k\ge 1$,
it suffices, in view of (\ref{ed.29.2}), (\ref{ed.29.3}), to show that
for such $t$ and for $N\gg 1$, we have 
$$
\frac{N^kt^N}{1-t}\le \frac{1}{D}\frac{1}{(1-t)^{k+1}},
$$
for any given $D\ge 1$, provided that $C$ is large enough. In other
terms, we need
$$
t^N(1-t)^k\le \frac{1}{D}N^{-k},\hbox{ for }\frac{1}{2}\le t\le 1-\frac{C}{N},
$$
when $C=C(D)$ is large enough and $N\ge N(C)\gg 1$. The left hand side
in this inequality is an increasing function of $t$ on the interval
$[0,1/\left( 1+k/N \right)]$. If $t\le
1-C/N \le 1/(1+k/N)$ (which is fulfilled when $C\ge 2k$ and $N\gg N(C)$)
it is 
$$
\le \left(1-\frac{C}{N} \right)^N\left(\frac{C}{N} \right)^k=\left( 1
+{\mO}_C\left(\frac{1}{N} \right)\right)e^{-C}C^kN^{-k}.
$$
This is $\le N^{-k}/D$ if $C\ge C(D)$, $N\ge N(C)$.
\end{proof}
For simplicity we will restrict the attention to the region
\begin{equation}\label{ed.30}
|z|\le r_0-1/N,
\end{equation}
where $G\asymp (1-|z|)^{-1}$, $G'\asymp (1-|z|)^{-2}$.
\par 
It follows from the calculation (\ref{ed.35.5}) below, that 
$$
|\partial _zZ|^2=\left(\frac{2}{t}\left(K(t\partial _t)^2K
		  +(t\partial _tK)^2 \right) \right)_{t=|z|^2}.
$$
This is $\asymp 1$ for $|z|\le 1/2$ and for $1/2\le |z|<1-1/N $ it is in 
view of Proposition \ref{ed1} and the subsequent observation
$$\asymp M_{N,0}M_{N,2}+M_{N,1}^2\asymp \frac{1}{(1-t)^4},\ t=|z|^2.$$ 
In the region (\ref{ed.30}) we get:
\begin{equation}\label{ed.31}
|Z'(z)|\asymp G(|z|)^2.
\end{equation}
(\ref{ed.29}), (\ref{ed.30}), (\ref{ed.31}) will be used in
(\ref{ed.27}), (\ref{ed.28}). 
\par 
Combining the implicit function theorem and Rouch\'e's theorem to 
\eqref{ed.18},we see that for 
$|\alpha '|<C_1N$, $\alpha '=\sum_2^N \alpha _je_j\in 
\overline{Z}(z)^\perp$, the equation
\begin{equation}\label{ed.20}
\widetilde{g}(z,\alpha _1,\alpha ')=0
\end{equation}
has a unique solution
\begin{equation}\label{ed.21}
\alpha _1=f(z,\alpha ')\in D(0,C_1N/G(|z|)).
\end{equation}
Here, we also use \eqref{ed.11}, \eqref{ed.16}. Moreover, $f$ satisfies
\begin{equation}\label{ed.22}f(z,\alpha ')=-\frac{z^N}{\delta
    |Z|^2}+{\mO}(1)\delta N^2={\mO}(1)\left(\frac{|z|^N}{\delta
      G^2}+\delta N^2 \right).\end{equation}

\par Differentiating the equation (\ref{ed.20}) (where $\alpha _1=f$)
we get
$$
\partial _z\widetilde{g}+\partial _\alpha \widetilde{g}\partial
_zf=0,\ \partial _{\overline{z}}\widetilde{g}+\partial _\alpha \widetilde{g}\partial
_{\overline{z}}f=0.
$$
Hence,
\begin{equation}\label{ed.23}
\begin{cases}
\partial _zf=-\left(\partial _{\alpha _1}\widetilde{g}
\right)^{-1}\partial _z\widetilde{g},\\
\partial _{\overline{z}}f=-\left(\partial _{\alpha _1}\widetilde{g}
\right)^{-1}\partial _{\overline{z}}\widetilde{g}.
\end{cases}
\end{equation}
Since $\widetilde{g}$ is holomorphic in $\alpha _1,\alpha '$ and in
$\alpha _1,\alpha _2,...,\alpha _{N^2}$, we see that $f$ is holomorphic in
$\alpha '$ and in $\alpha _2,...,\alpha _{N^2}$ Applying $\partial
_{\alpha _2},...,\partial _{\alpha _{N^2}}$ to (\ref{ed.20}), we get
\begin{equation}\label{ed.24}
\partial _{\alpha _j}f=-\left(\partial _{\alpha _1}\widetilde{g}
\right)^{-1} \partial _{\alpha _j}\widetilde{g},\ 2\le j\le {N^2}.
\end{equation}
\par 
Combining (\ref{ed.19}) in the form,
$$
\partial _{\alpha _1}\widetilde{g}(z,\alpha )=(1+{\mO}(G(|z|)\delta
N))\delta |Z(z)|^2,
$$
(\ref{ed.19.5}), (\ref{ed.27}), (\ref{ed.28}) with (\ref{ed.23}) and
(\ref{ed.24}), we get
\begin{equation}\label{ed'.1}
\begin{split}
&\partial _zf=-\frac{(1+{\mO}(G\delta N))}{\delta |Z(z)|^2}\times
\\ & \left(Nz^{N-1}+\delta
f\partial _z\left(|Z|^2 \right)
+{\mO}\left( G^2\delta ^2N\right)\left|\sum_2^{N^2} \alpha _j\partial _ze_j
\right|
+{\mO}(1)\frac{(G\delta N)^2}{r_0-|z|}
\right).
\end{split}
\end{equation}
\begin{equation}\label{ed'.2}
\begin{split}
\partial _{\overline{z}}f=&-\frac{(1+{\mO}(G\delta N))}{\delta
  |Z(z)|^2}\times \\
&\left(\delta f\partial _{\overline{z}}\left(|Z|^2 \right)
+{\mO}\left( G^2\delta ^2N\right)\left| f\overline{\partial _zZ}
  +\sum_2^{N^2} \alpha _j\partial _{\overline{z}}e_j \right|
 \right),
\end{split}
\end{equation}
\begin{equation}\label{ed'.3}
\partial _{\alpha _j}f={\mO}(1)\frac{G^2\delta ^2N}{\delta
  G^2}={\mO}(\delta N),\ 2\le j\le {N^2}.
\end{equation}
From (\ref{ed.29}) and the observation prior to Proposition \ref{ed1}
we know that 
$$
\partial _z\left( |Z|^2 \right),\ \partial _{\overline{z}}\left( |Z|^2
\right) \asymp G(|z|)^3|z|.
$$
Recall also that $|Z|\asymp G(|z|)$. Using this in (\ref{ed'.1}),
(\ref{ed'.2}), we get
\begin{equation}\label{ed'.4}
\begin{split}
\partial _zf=&\frac{{\mO}(1)}{\delta G^2}\times \\
&\left(N|z|^{N-1}+\delta |f|G^3|z|
+{\mO}\left(G^2\delta ^2N \right)\left| \sum_2^{N^2} \alpha _j\partial
    _ze_j \right| +{\mO}(1)\frac{G^2\delta ^2N^2}{r_0-|z|})
\right) .
\end{split}
\end{equation}
\section{Choosing appropriate coordinates}\label{chvar}
\par 
The next task will be to choose an orthonormal basis
$e_1(z),e_2,...,e_{N^2}(z)$ in ${\C}^{N^2}$ with
$e_1(z)=|Z(z)|^{-1}\overline{Z}(z)$ such that we get a nice control
over $\sum_2^{N^2}\alpha _j\partial _ze_j$, $\sum_2^{N^2}\alpha _j\partial
_{\overline{z}}e_j$ and such that 
$$
{{dQ_1\wedge ...\wedge dQ_{N^2}}_\vert}_{\alpha _1=f(z,\alpha ')}
$$
can be expressed easily up to small errors. Consider a point $z_0\in
D(0,r_0-N^{-1})$. We shall see
below that the vectors $\overline{Z}(z)$, $\overline{\partial _zZ}(z)$
are linearly independent for every $z\in D(0,1)$
\begin{prop}\label{ed2}
There exists an orthonormal basis $e_1(z),e_2(z),...,e_{N^2}(z)$ in ${\C}^{N^2}$, 
depending smoothly on $z\in \mathrm{neigh\,}(z_0)$ such that
\begin{equation}\label{ed'.6}
e_1(z)=|Z(z)|^{-1}\overline{Z}(z),
\end{equation}
\begin{equation}\label{ed'.7}
{\C}e_1(z_0)\oplus{\C}e_2(z_0)={\C}\overline{Z}(z_0)\oplus
\overline{\partial _zZ}(z_0),
\end{equation}
\begin{equation}\label{ed'.8}
e_j(z)-e_j(z_0)={\mO}((z-z_0)^2),\ j\ge 3.
\end{equation}
\end{prop}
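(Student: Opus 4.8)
The plan is to build the orthonormal frame $e_1(z),\dots,e_{N^2}(z)$ by a Gram--Schmidt procedure that is tuned so that the higher basis vectors $e_3,\dots,e_{N^2}$ are as ``stationary'' as possible at $z_0$. First I would observe that $\overline{Z}(z)$ and $\overline{\partial_z Z}(z)$ are $\R$-linearly — in fact $\C$-linearly — independent for every $z\in D(0,1)$: indeed $|Z(z)|^2=K(t)$ and $\partial_z(|Z|^2)=2KK'\overline{z}$ with $K,K'>0$, and more directly $Z(z)$ is (componentwise) the monomial vector $(z^{N-j+k-1})_{j,k}$, whose derivative $\partial_z Z$ cannot be proportional to $Z$ since the exponents are shifted; one checks $\langle \partial_z Z, Z\rangle$ against $|Z|\,|\partial_z Z|$ and uses \eqref{ed.15}, \eqref{ed.31} to see the Cauchy--Schwarz inequality is strict. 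This legitimizes \eqref{ed'.7}.

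Next I would fix $e_1(z):=|Z(z)|^{-1}\overline{Z}(z)$ as prescribed by \eqref{ed'.6}; this is smooth in $z$ since $Z$ never vanishes on $D(0,1)$. Then set $v_2(z):=\overline{\partial_z Z}(z)-\langle \overline{\partial_z Z}(z),e_1(z)\rangle e_1(z)$, the component of $\overline{\partial_z Z}$ orthogonal to $e_1(z)$, and $e_2(z):=v_2(z)/|v_2(z)|$; by the independence just established $v_2(z)\neq 0$, so $e_2$ is smooth near $z_0$ and \eqref{ed'.7} holds by construction. For $j\geq 3$ the idea is: pick any fixed orthonormal basis $f_3,\dots,f_{N^2}$ of $(\C e_1(z_0)\oplus\C e_2(z_0))^\perp=\C^{N^2}\ominus(\C\overline{Z}(z_0)\oplus\overline{\partial_z Z}(z_0))$, and for $z$ near $z_0$ project each $f_j$ onto $(\C e_1(z)\oplus\C e_2(z))^\perp$ and re-orthonormalize (Gram--Schmidt) to get $e_j(z)$. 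The point is that $P(z):=$ orthogonal projection onto $(\C e_1(z)\oplus\C e_2(z))^\perp$ satisfies $P(z_0)f_j=f_j$, so $e_j(z)-e_j(z_0)=(P(z)-P(z_0))f_j+(\text{Gram--Schmidt corrections})=\mO(z-z_0)$ in general; to upgrade this to $\mO((z-z_0)^2)$ as in \eqref{ed'.8} I would instead choose $e_1,e_2$ themselves more carefully so that the plane $\mathcal{P}(z)=\C e_1(z)\oplus\C e_2(z)$ is stationary to first order at $z_0$, or equivalently arrange that the first-order variation of $P(z)$ at $z_0$ maps $\mathcal{P}(z_0)^\perp$ into $\mathcal{P}(z_0)^\perp$.

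The mechanism for that is as follows. The plane $\mathcal{P}(z)=\mathrm{span}_\C(Z(z)$-conjugate, $\partial_z Z(z)$-conjugate$)$ has a derivative $\dot{\mathcal{P}}$ at $z_0$ determined by $\overline{\partial_z^2 Z}(z_0)$ and $\overline{\partial_z\partial_{\overline z}Z}(z_0)$; but $Z(z)$ is holomorphic in $z$, so $\partial_{\overline z}Z\equiv 0$ and the only new vector entering at first order is $\overline{\partial_z^2 Z}(z_0)$. So $P(z)-P(z_0)$, restricted to $\mathcal{P}(z_0)^\perp$, is governed entirely by the projections of $\overline{\partial_z^2 Z}$; one then defines $e_j(z)$ for $j\geq 3$ by a second-order-corrected Gram--Schmidt that subtracts off exactly this linear term (replacing $f_j$ by $f_j+(z-z_0)\,c_j$ with $c_j\in\mathcal{P}(z_0)$ chosen to kill the $\mO(z-z_0)$ contribution, then normalizing). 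Since the correction lives in $\mathcal{P}(z_0)=\C e_1(z_0)\oplus\C e_2(z_0)$, it does not disturb \eqref{ed'.6}, \eqref{ed'.7}, and what remains is $\mO((z-z_0)^2)$, giving \eqref{ed'.8}. I expect the main obstacle to be bookkeeping: verifying that the holomorphy of $Z$ really does force the first-order variation of $\mathcal{P}$ to be one-dimensional and lie in the span of $\overline{\partial_z^2 Z}$, and then checking that the second-order Gram--Schmidt correction can be performed smoothly and simultaneously for all $j\geq 3$ without reintroducing a linear term — this is a finite-dimensional linear-algebra computation but one must be careful that the corrections for different $j$ stay mutually orthogonal to second order.
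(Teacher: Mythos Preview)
Your overall strategy is sound but the execution has a gap, and the paper's proof is both simpler and avoids the obstacle you create for yourself.

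The difficulty you run into comes from fixing $e_2(z)$ too early. Once you set $e_2(z)=v_2(z)/|v_2(z)|$ with $v_2(z)=\overline{\partial_z Z}(z)-\langle\overline{\partial_z Z}(z),e_1(z)\rangle e_1(z)$, the plane $\mathcal P(z)=\C e_1(z)\oplus\C e_2(z)$ genuinely rotates to first order at $z_0$: the coefficient $(f_j\mid e_2(z))$ has a nonzero $(z-z_0)$--term proportional to $f_j\cdot\partial_z^2 Z(z_0)$. Your proposed fix, replacing $f_j$ by $f_j+(z-z_0)c_j$ with $c_j\in\mathcal P(z_0)$ and then Gram--Schmidt projecting onto $\mathcal P(z)^\perp$, does not cancel this: since $c_j\in\mathcal P(z_0)=\ker P(z_0)$ one has $P(z)c_j=\mO(z-z_0)$, so $(z-z_0)P(z)c_j=\mO((z-z_0)^2)$ contributes nothing at first order, and the linear term in $P(z)f_j$ survives. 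Conversely, if you add $(z-z_0)c_j$ \emph{after} projecting, the result is no longer orthogonal to $e_2(z)$, so the system fails to be orthonormal. In fact the requirement $e_j(z)\perp e_2(z)$ together with $e_j(z)=f_j+\mO((z-z_0)^2)$ forces $(f_j\mid e_2(z))=\mO((z-z_0)^2)$, which your explicit $e_2(z)$ simply does not satisfy.

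The paper sidesteps this entirely by reversing the order of construction: it defines $e_3(z),\dots,e_{N^2}(z)$ \emph{before} $e_2(z)$, by projecting the fixed vectors $e_j(z_0)$ (chosen orthonormal in $(\C\overline{Z}(z_0)\oplus\C\overline{\partial_z Z}(z_0))^\perp$) onto $e_1(z)^\perp$ \emph{only}, followed by the symmetric orthonormalization $V(z)(V(z)^*V(z))^{-1/2}$. The key observation is that $(e_j(z_0)\mid e_1(z))=e_j(z_0)\cdot Z(z)/|Z(z)|$, and the numerator $k(z):=e_j(z_0)\cdot Z(z)$ is \emph{holomorphic} with $k(z_0)=0$ and $k'(z_0)=e_j(z_0)\cdot\partial_z Z(z_0)=(e_j(z_0)\mid\overline{\partial_z Z}(z_0))=0$ by the choice of $e_j(z_0)$. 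Hence $k(z)=\mO((z-z_0)^2)$ automatically, with no correction needed, and \eqref{ed'.8} follows. Only then is $e_2(z)$ defined, as the smooth unit vector orthogonal to $e_1(z),e_3(z),\dots,e_{N^2}(z)$; condition \eqref{ed'.7} holds because at $z_0$ the span of $e_3,\dots,e_{N^2}$ is $(\C\overline{Z}(z_0)\oplus\C\overline{\partial_z Z}(z_0))^\perp$ by construction. This is in effect your first alternative (``choose $e_2$ so that $\mathcal P(z)$ is stationary to first order''), realized in the cleanest possible way: rather than engineering $e_2(z)$ directly, one lets it be whatever is left over after the holomorphy of $Z$ has done the work.
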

\begin{proof}
We choose $e_1(z)$ as in (\ref{ed'.6}). Let $e_3(z_0),...,e_{N^2}(z_0)$ be
an orthonormal basis in $\left({\C}\overline{Z}(z_0)\oplus {\C}
\overline{\partial _zZ}(z_0) \right)^\perp$. Then we get an
orthonormal family $e_3(z),...,e_{N^2}(z)$ in $e_1(z)^\perp$ in the
following way:

\par Let $V_0$ be the isometry ${\C}^{{N^2}-2}\to {\C}^{N^2}$, defined
by $V_0\nu _j^0=e_j(z_0)$, $j=3,...,{N^2}$, where $\nu _3^0,...,\nu _{N^2}^0$
is the canonical basis in ${\C}^{{N^2}-2}$ with a non-canonical
labeling. Let $\pi (z)u=(u|e_1(z))e_1(z)$ be the orthogonal
projection onto ${\C}e_1(z)$. For $z\in \mathrm{neigh\,}(z_0,{\C})$, 
let $V(z)=(1-\pi (z))V_0$. Then $f_j(z)=V(z)\nu _j^0$,
$j=3,...,{{N^2}}$ form a linearly independent system in $e_1(z)^\perp$ and
we get an orthonormal system of vectors that span the same hyperplane
in $e_1(z)^\perp$ by Gram orthonormalization,
$$
e_j(z)=V(z)(V^*(z)V(z))^{-\frac{1}{2}}\nu _j^0,\ 3\le j\le {N^2}.
$$

\par We have 
$$
V(z)\nu _j^0=(1-\pi (z))e_j(z_0)=e_j(z_0)-(e_j(z_0)|e_1(z))e_1(z),
$$
$$
(e_j(z_0)|e_1(z))=\frac{(e_j(z_0)|\overline{Z}(z))}{|Z(z)|}={\mO}((z-z_0)^2),
$$
since $(e_j(z_0)|\overline{Z}(z))=e_j(z_0)\cdot Z(z)=:k(z)$ is a
holomorphic function of $z$ with
$k(z_0)=(e_j(z_0)|\overline{Z}(z_0))=0$,
$k'(z_0)=(e_j(z_0)|\overline{\partial _zZ}(z_0))=0$.
Thus, $V(z)=V(z_0)+{\mO}(z-z_0)^2)$ and we conclude that
(\ref{ed'.8}) holds. Let $e_2(z)$ be a normalized vector in
$(e_1(z),e_3(z),e_4(z),...,e_{N^2}(z))^\perp$ depending smoothly on
$z$. Then $e_1(z),e_2(z),...,e_{N^2}(z)$ is an orthonormal basis and since
$e_3(z_0),...,e_{N^2}(z_0)$ are orthogonal to
$\overline{Z}(z_0),\overline{\partial Z}(z_0)$ by construction, we get (\ref{ed'.7}).
\end{proof}
\par 
We can make the following explicit choice:
\begin{equation}\label{ed'.9}
e_2(z)=|f_2|^{-1}f_2,\ f_2=\overline{\partial _zZ}(z)-\sum_{j\ne
  2}(\overline{\partial _zZ}(z)|e_j(z))e_j(z),
\end{equation}
so that for $z=z_0$,
\begin{equation}\label{ed'.10}
e_2(z_0)=|f_2(z_0)|^{-1}f_2(z_0),\ f_2(z_0)= 
\overline{\partial _zZ}(z_0)-(\overline{\partial _zZ}(z_0)|e_1(z_0))e_1(z_0).
\end{equation}
\par 
We next compute some scalar products and norms with $Z$ and
$\partial _zZ$. Recall that $Z(z)=\left(z^{N-j+k-1} \right)_{j,k=1}^N$
and that $|Z(z)|=K(|z|^2)$, $K(t)=\sum_0^{N-1}t^\nu $. Repeating
basically the same computation, we get
$$
z\partial _zZ=\left((N-j+k-1)z^{N-j+k-1} \right)_{j,k=1}^N,
$$
and
\begin{equation}\label{ed.35.5}\begin{split}
&|z\partial _zZ|^2=\sum_{j,k=1}^N (N-j+k-1)^2|z|^{2(N-j+k-1)}
=\sum_{\nu ,\mu =0 }^{N-1}(\nu +\mu )^2|z|^{2(\nu +\mu )}\\
&=\sum_0^{N-1}\nu ^2|z|^{2\nu }\sum_0^{N-1}|z|^{2\mu }+2\sum_0^{N-1}\nu
|z|^{2\nu }\sum_0^{N-1}\mu |z|^{2\mu }+\sum_0^{N-1}|z|^{2\nu
}\sum_0^{N-1}\mu ^2|z|^{2\mu }\\
&=2\left(K(t\partial _t)^2K + (t\partial _tK)^2 \right)_{t=|z|^2}.
\end{split}
\end{equation}
Similarly,
\[
\begin{split}
(z\partial _zZ|Z)=&\sum_{j,k=1}^N (N-j+k-1)|z|^{2(N-j+k-1)}\\
=&\sum_{\nu =0}^{N-1}\sum_{\mu =1}^{N-1} (\nu +\mu )|z|^{2(\nu +\mu
  )}\\
=&2(Kt\partial _tK)_{t=|z|^2}.
\end{split}
\]
Then, by a straight forward calculation,
\begin{equation}\label{ed.36}
|\partial _zZ|^2-\frac{|(\partial
  _zZ|Z)|^2}{|Z|^2}=\left(\frac{2}{t}\left(K(t\partial _t)^2K -(t\partial _tK)^2 \right)\right)_{t=|z|^2}
\end{equation}
\par 
Here,
\[\begin{split}
\frac{2}{t}\left(K(t\partial _t)^2K-(t\partial _tK)^2 \right)
&=\frac{2}{t}\sum_0^{N-1}t^\nu \sum_0^{N-1}\nu ^2t^\nu
-\frac{2}{t}\left(\sum_0^{N-1} \nu t^\nu  \right)^2\\
=\sum_{\nu ,\mu =0}^{N-1} \left(\nu ^2+\mu ^2-2\nu \mu
  \right)t^{\nu +\mu -1}  
&=\sum_{\nu ,\mu =0}^{N-1}(\nu -\mu )^2t^{\nu +\mu -1}\\
&=\sum_{k=0}^{2N-3}a_{k,N}t^k, 
\end{split}
\]
where
$$
a_{k,N}=\sum_{\substack{\nu +\mu -1=k \\ 0\le \nu ,\mu \le N-1}}(\nu -\mu )^2.
$$
We observe that
$$
a_{k,N}\le {\mO}(1)(1+k)^3\hbox{ uniformly with respect to }N,
$$
$$a_{k,N}=a_{k,\infty }\hbox{ is independent of }N\hbox{ for }k\le N-2,$$
$$
a_{k,\infty }\ge (1+k)^3/{\mO}(1).
$$
We conclude that
$$
\frac{1}{C}\left(1+M_{N-1,3} \right)\le
\frac{2}{t}\left(K(t\partial _t)^2K-(t\partial _tK)^2 \right)
 \le C\left(1+M_{2N-2,3} \right)
$$
and \eqref{ed.29.4.5} shows that the first and third members are of the same
order of magnitude,
$$
\asymp 1+M_{N,3}(t)\asymp \min \left(\frac{1}{1-t},N \right)^4
$$
which is $\asymp 1+ M_{\infty ,3}(t)$, for $0\le t\le 1-1/N$. From this
and Proposition \ref{ed1} we get:
\begin{prop}\label{ed3}
We have
\begin{equation}\label{ed.37}
\frac{2}{t}(K(t\partial _t)^2K-(t\partial _tK)^2)\asymp K^4,\ 0<t\le 1-1/N,
\end{equation}
where we recall that $K=K_N$ depends on $N$ and that 
$$
K_N=K_\infty -\frac{t^N}{1-t}.
$$
We have
\begin{equation}\label{ed.38}
\begin{cases}
t\partial _tK_N=t\partial _tK_\infty +{\mO}\left(\frac{Nt^N}{1-t}
\right),\ &t\le 1-\frac{1}{N},\\
(t\partial _t)^2K_N=(t\partial _t)^2K_\infty +{\mO}\left(\frac{N^2t^N}{1-t}
\right),\ &t\le 1-\frac{1}{N},
\end{cases}
\end{equation}
and it follows that
\begin{align}\label{ed.39}
  &\frac{2}{t}\left( K_N(t\partial _t) ^2K_N-(t\partial _tK_N) ^2\right)-
  \frac{2}{t}\left( K_\infty (t\partial _t) ^2K_\infty -(t\partial _t
    K_\infty) ^2 \right)
    \notag \\
  &= {\mO}\left(\frac{N^2t^N}{(1-t)^2} \right),
\end{align}
for $t\le 1-1/N$.
\end{prop}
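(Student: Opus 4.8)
\emph{Proof plan.} The estimate \eqref{ed.37} is almost entirely contained in the computation just preceding the statement, where it is shown that
$$\tfrac{2}{t}\big(K(t\partial_t)^2K-(t\partial_tK)^2\big)=\sum_{k=0}^{2N-3}a_{k,N}\,t^k,\qquad a_{k,N}=\!\!\sum_{\substack{\nu+\mu-1=k\\0\le\nu,\mu\le N-1}}\!\!(\nu-\mu)^2,$$
with $a_{k,N}=a_{k,\infty}$ for $k\le N-2$, $a_{k,\infty}\asymp(1+k)^3$, and $0\le a_{k,N}\le\mO(1)(1+k)^3$ uniformly in $N$. The plan is to squeeze this series between $C^{-1}\big(1+M_{N-1,3}(t)\big)$ and $C\big(1+M_{2N-2,3}(t)\big)$, and then to invoke \eqref{ed.29.4.5}: on $0<t\le1-1/N$ both of these bounds are $\asymp\min\!\big(\tfrac1{1-t},N\big)^4$. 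Since on the same range $K=K_N=(1-t^N)/(1-t)\asymp\min\!\big(\tfrac1{1-t},N\big)$ (cf. \eqref{pj.10.4}), raising to the fourth power yields \eqref{ed.37}.

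For \eqref{ed.38} the starting point is the exact identity $K_N=K_\infty-L$, where $L=L_N:=\tfrac{t^N}{1-t}=\sum_{\nu\ge N}t^\nu$. Applying $t\partial_t$ and $(t\partial_t)^2$ termwise, $(t\partial_t)^jK_N-(t\partial_t)^jK_\infty=-\sum_{\nu\ge N}\nu^jt^\nu=-\big(M_{\infty,j}(t)-M_{N,j}(t)\big)$ for $j=1,2$. Proposition~\ref{ed1}, specifically \eqref{ed.29.3}, bounds the right-hand side in modulus by $\mO(1)\tfrac{t^N}{1-t}\big(N+\tfrac1{1-t}\big)^j$, and for $t\le1-1/N$ one has $\tfrac1{1-t}\le N$, so $\big(N+\tfrac1{1-t}\big)^j\asymp N^j$; this is exactly \eqref{ed.38}.

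Finally, \eqref{ed.39} should follow from the previous two parts by a purely algebraic expansion. Write $\Phi(u):=\tfrac2t\big(u\,(t\partial_t)^2u-(t\partial_tu)^2\big)$; this is the quadratic form attached to a symmetric bilinear form $B$, so $\Phi(K_N)-\Phi(K_\infty)=\Phi(K_\infty-L)-\Phi(K_\infty)=-2B(K_\infty,L)+\Phi(L)$, which, spelled out, reads
$$\Phi(K_N)-\Phi(K_\infty)=\tfrac2t\Big(-K_\infty(t\partial_t)^2L-L\,(t\partial_t)^2K_\infty+2\,(t\partial_tK_\infty)(t\partial_tL)+L\,(t\partial_t)^2L-(t\partial_tL)^2\Big).$$
The plan is then to substitute the closed forms $K_\infty=\tfrac1{1-t}$, $t\partial_tK_\infty=\tfrac{t}{(1-t)^2}$, $(t\partial_t)^2K_\infty=\tfrac{t(1+t)}{(1-t)^3}$, together with $L=\tfrac{t^N}{1-t}$ and the bounds $t\partial_tL=\mO\!\big(\tfrac{Nt^N}{1-t}\big)$, $(t\partial_t)^2L=\mO\!\big(\tfrac{N^2t^N}{1-t}\big)$ just obtained in \eqref{ed.38}, and to estimate the five terms one at a time, absorbing each surplus factor $\tfrac1{1-t}$ into a factor $N$ via $\tfrac1{1-t}\le N$. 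The dominant contribution is $\tfrac2tK_\infty(t\partial_t)^2L=\mO\!\big(\tfrac{N^2t^{N}}{(1-t)^2}\big)$; the terms $\tfrac2tL(t\partial_t)^2K_\infty$ and $\tfrac2t(t\partial_tK_\infty)(t\partial_tL)$ are no larger once one trades $\tfrac1{1-t}\le N$, while the two terms quadratic in $L$ carry an extra factor $t^N$ and are far smaller.

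\emph{Expected main obstacle.} Parts \eqref{ed.37} and \eqref{ed.38} are essentially bookkeeping layered on top of Proposition~\ref{ed1}. The one delicate point is the last estimate: the identity for $\Phi(K_N)-\Phi(K_\infty)$ is trivial, but one must keep scrupulous track of the powers of $t$ and of $1-t$ appearing in the five error terms so that none of them exceeds the claimed bound — and it is precisely the hypothesis $t\le1-1/N$, i.e. $\tfrac1{1-t}\le N$, that converts the negative powers of $1-t$ produced by the differentiations into the harmless powers of $N$ on the right of \eqref{ed.39}.
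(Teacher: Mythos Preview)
Your proposal is correct and follows exactly the route the paper takes (the computation establishing \eqref{ed.37} is the paragraph immediately preceding the proposition, and the paper leaves \eqref{ed.38}, \eqref{ed.39} to the reader; you have supplied precisely the intended details via Proposition~\ref{ed1} and the bilinear expansion of $\Phi$).

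One small arithmetic point: when you write ``$\tfrac{2}{t}K_\infty(t\partial_t)^2L=\mO\!\big(\tfrac{N^2t^N}{(1-t)^2}\big)$'' you have silently dropped the prefactor $2/t$; the honest bound coming out of your own inputs is $\mO\!\big(\tfrac{N^2t^{N-1}}{(1-t)^2}\big)$. This is immaterial once $t\ge 1/\mO(1)$ (the only regime used later, cf.\ the hypothesis $r_0\ge 1/\mO(1)$ in Theorem~\ref{ed5}), and indeed a direct look at the series $\Phi(K_N)-\Phi(K_\infty)=\sum_{k\ge N-1}(a_{k,N}-a_{k,\infty})t^k$ shows the leading term is $-2N^2t^{N-1}$, so the extra $t^{-1}$ is genuinely there for small $t$. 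Just be aware of it when you write up the final version.
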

Proposition \ref{ed3} and (\ref{ed.36}) give
\begin{equation}\label{ed.40}
|\partial _zZ|^2-\frac{|(\partial _zZ|Z)|^2}{|Z|^2}\asymp K(|z|^2)^4.
\end{equation}
This implies that $\partial _zZ$, $Z$ are linearly independent.
\par Assume that $$|\nabla _ze_1(z)|={\mO}(m)$$
for some weight $m\ge 1$. We shall see below that this holds when
$m=K(|z|^2)$. Then $\| \nabla _z\Pi \|={\mO}(m)$ and hence
$\|\nabla_zV \|={\mO}(m)$. It follows that $\| \nabla
_z(V^*(z)V(z))\|={\mO}(m)$. By standard (Cauchy-Riesz)
functional calculus, using also that $\| V(z)^{-1}\|={\mO}(1)$, we
get $\| \nabla _z(V^*(z)V(z))^{-\frac{1}{2}}\| = {\mO}(m)$. Hence
$\| \nabla _zU(z)\|={\mO}(m)$, where $U(z)=V(z)(V^*(z)V(z))^{-1/2}$
is the isometry appearing in the proof of Proposition \ref{ed2}. Since $\nabla _ze_j=(\nabla
_zU(z))\nu _j^0$, we conclude that $\| \nabla _zU(z)\| ={\mO}(m)$, so 
\begin{equation}\label{ed.41}
\left| \sum_3^{N^2} \alpha _j\nabla _ze_j \right| \le {\mO}(m)\|\alpha
\|_{{\C}^{{N^2}-2}}.
\end{equation}
We next show that we can take $m=K(|z|^2)$. We have
\begin{equation}\label{ed.42}
\nabla _ze_1=\frac{\nabla _z\overline{Z}}{|Z|}-\frac{\nabla
  _z|Z|}{|Z|^2}\overline{Z}=\frac{\nabla
  _z \overline{Z}}{K}-\frac{K'\nabla _z(z\overline{z})}{K^2}\overline{Z}.
\end{equation}
By \eqref{ed.35.5},
$$
|\partial _z Z|=\left(\frac{2}{t}\left(K(t\partial
    _t)^2K+(t\partial _tK)^2 \right)
\right)^{\frac{1}{2}}_{t=|z|^2}={\mO}(K^2).
$$
Since $Z$ is holomorphic, this leads to the same estimates for
$|\nabla _zZ|$ and $|\nabla _z\overline{Z}|$, and 
$|\partial_z^2 Z| = \mO(K^3)$, for $|z|< 1-N^{-1}$, by the Cauchy 
inequalities. Using this in
(\ref{ed.42}), we get
\begin{equation}\label{ed.43}
|\nabla _ze_1|={\mO}(K).
\end{equation}
Thus we can take $m=K(|z|^2)$ in (\ref{ed.41}). Let $f_2$ be the
vector in (\ref{ed'.9}) so that $e_2(z)=|f_2|^{-1}f_2$. Recall that
$e_j=U(z)\nu _j^0$, where we now know that $\| \nabla _zU(z)\| ={\mO}(K)$. 
Write,
$$
\nabla _zf_2=\nabla _z\overline{\partial _zZ}-\sum_{j\ne 2}\left(
(\nabla _z\overline{\partial _zZ}|e_j)e_j+(\overline{\partial
  _zZ}|\nabla _ze_j)e_j+(\overline{\partial _zZ}|e_j)\nabla _ze_j
 \right).
$$

\par Here, $|\nabla _z\overline{\partial _zZ}|={\mO}(K^3)$, as we
have just seen. It is also clear that the term for $j=1$ in the sum
above is ${\mO}(K^3)$. It remains to study
$|\mathrm{I}+\mathrm{II}+\mathrm{III}|\le
|\mathrm{I}|+|\mathrm{II}|+|\mathrm{III}|$, where
\[\begin{split}
\mathrm{I}&=\sum_3^{N^2}(\nabla _z\overline{\partial _zZ}|e_j)e_j,\\
\mathrm{II}&=\sum_3^{N^2}(\overline{\partial
  _zZ}|\nabla _ze_j)e_j,\\
\mathrm{III}&=\sum_3^{N^2}(\overline{\partial _zZ}|e_j)\nabla _ze_j.
\end{split}
\]

\par Here, $|\mathrm{I}|\le |\nabla _z\overline{\partial _zZ}|=
{\mO}(K^3)$ and by (\ref{ed.41}) we have 
$|\mathrm{III}|\le {\mO}(K)|\overline{\partial _zZ}|={\mO}(K^3)$. Further,
\[
\begin{split}
\mathrm{II}&=\sum_3^{N^2} (\overline{\partial _zZ}|(\nabla _zU(z))\nu
_j^0)e_j\\
&=\sum _3^{N^2}((\nabla _zU(z))^*\overline{\partial _zZ}|\nu _j^0)e_j,
\end{split}
\]
so
$$
|\mathrm{II}|=|(\nabla _zU(z))^*\overline{\partial _zZ}|={\mO}(K)K^2={\mO}(K^3).
$$
Thus, 
\begin{equation}\label{ed.43.5}
|\nabla _zf_2|={\mO}(K^3).
\end{equation}
Recall from (\ref{ed'.10}) that for $z=z_0$,
$$
f_2=\overline{\partial _zZ}-(\overline{\partial _zZ}|e_1)e_1,
$$
$$
|f_2|^2=|\partial _zZ|^2-\frac{|(\partial _zZ|Z)|^2}{|Z|},
$$
so by (\ref{ed.40}),
$$
|f_2(z_0)|\asymp K(|z_0|^2)^2,
$$
Hence,
$$
|f_2(z)|\asymp K^2,\ z\in \mathrm{neigh\,}(z_0).
$$
From this, (\ref{ed'.9}) and (\ref{ed.40}), we conclude first that
$\nabla_z|f_2|=\mO(K^3)$ and then that 
\begin{equation}\label{ed.44}
|\nabla _ze_2|={\mO}(K).
\end{equation}
This completes the proof of the fact that we can take $m=K$ above. In
particular (\ref{ed.41}) holds with $m=K(|z|^2)\asymp G(|z|)$, so 
\begin{equation}\label{ed.45}
\left| \sum_2^{N^2} \alpha _j\partial _ze_j \right| \le 
{\mO}(1)G|\alpha |\le {\mO}(1)GN,
\end{equation}
where we used the assumption that $|Q|\le C_1N$ in the last step. 

Combining this with (\ref{ed'.4}), (\ref{ed'.3}), (\ref{ed.22}), (\ref{ed.29}) and
the observation prior to Proposition \ref{ed1}, we get 
\[
\begin{split}
\partial _zf&=\frac{{\mO}(1)}{\delta G^2}\left(N|z|^{N-1} +\delta
  \left( \frac{|z|^N}{\delta G^2}+\delta N^2 \right)G^3+G^2\delta
  ^2NGN+\frac{G^2\delta ^2N^2}{r_0-|z|} \right)\\
&={\mO}(1)\left( \frac{N|z|^{N-1}}{\delta G^2}+\frac{|z|^N}{\delta
    G}+G\delta N^2+\frac{\delta N^2}{r_0-|z|} \right).
\end{split}
\]
In the last parenthesis the second term is dominated by the first one and
the third term is dominated by the fourth. If we
recall that $r_0-|z|\ge 1/N$, we get
\begin{equation}\label{ed.46}
\partial _zf={\mO}(1)\left(\frac{N|z|^{N-1}}{\delta G^2} +\delta N^3 \right).
\end{equation}
Similarly, from (\ref{ed'.2}), (\ref{ed.31}) we get
\[
\begin{split}
  \partial _{\overline{z}}f&=\frac{{\mO}(1)}{\delta
    G^2}\left(\delta \left(\frac{|z|^N}{\delta G^2}+\delta N^2 \right)
    G^3+G^2\delta ^2N\left(\left(\frac{|z|^N}{\delta G^2}+\delta
        N^2 \right)G^2+GN \right) \right)\\
&={\mO}(1)\left( \frac{|z|^N}{\delta G}+\delta N^2G +
	  N|z|^N +G^2\delta ^2N^3+G\delta N^2
\right).
\end{split}
\]
Using (\ref{ed.11}), we get
\begin{equation}\label{ed.47}
\partial _{\overline{z}}f={\mO}(1)\left(\frac{|z|^N}{\delta
    G}+\delta N^2G \right),
\end{equation}
see \eqref{ed.22}. This will be used together with the estimates $\partial _{\alpha
  _j}f={\mO}(\delta N)$ in (\ref{ed'.3}).

The differential form $dQ_1\wedge dQ_2\wedge ...\wedge dQ_{N^2}$ will
change only by a factor of modulus one if we express $Q$ in another
fixed orthonormal basis and we will choose for that the basis
$e_1(z_0),...,e_{N^2}(z_0)$:
\[
Q=\sum_1^{N^2} Q_k e_k(z_0),\ \ Q_k=(Q|e_k(z_0)).
\]
Write 
$$Q=\alpha _1\underbrace{\overline{Z}(z)}_{|Z(z)|e_1(z)}+\sum_2^{N^2} \alpha _ke_k(z)
$$
and restrict to $\alpha _1=f(z,\alpha _2,...,\alpha _{N^2})$, where we
sometimes identify $\alpha '\in \overline{Z}(z)^\perp$ with $(\alpha
_2,...,\alpha _{N^2})$:
$$
{{Q}_\vert}_{\alpha _1=f(z,\alpha ')}=
f(z,\alpha ')\overline{Z}(z)+\sum_2^{N^2} \alpha _ke_k(z).
$$
Then,
$$
Q_j=f(\overline{Z}(z)|e_j(z_0))+\sum_{k=2}^{N^2 }\alpha _k(e_k(z)|e_j(z_0)),
$$
\[
\begin{split}
dQ_j=&(d_zf+d_{\alpha
  '}f)(\overline{Z}(z)|e_j(z_0))+f(d_z\overline{Z}(z)|e_j(z_0))\\
&+\sum_{k=2}^{N^2}\alpha _k(d_ze_k(z)|e_j(z_0)) +\sum_{k=2}^{N^2}d\alpha _k(e_k(z)|e_j(z_0)).
\end{split}
\]
Taking $z=z_0$ until further notice, we get with $\alpha '=(\alpha
_2,...,\alpha _{N^2})$:
$$
dQ_j=(d_zf+d_{\alpha '}f)(\overline{Z}|e_j)+f(\overline{\partial
  _zZ}|e_j)d\overline{z}+\alpha _2(d_ze_2|e_j)+\begin{cases}d\alpha
  _j,\ j\ge 2,\\ 0,\ j=1\end{cases}.
$$
Here, we used (\ref{ed'.8}). The first term to the right is equal to
$(d_zf+d_{\alpha '}f)|\overline{Z}|$ when $j=1$ and it vanishes when
$j\ge 2$. The second term vanishes for $j\ge 3$, by (\ref{ed'.7}). The
third term is equal to $-\alpha _2(e_2|d_ze_j)$ (by differentiation of
the identity $(e_2|e_j)=\delta _{2,j}$) and it vanishes for $j\ge 3$
(remember that we take $z=z_0$). Thus, for $z=z_0$:
\[
\begin{split}
  dQ_1&=|\overline{Z}| (d_zf+d_{\alpha '}f)+f(\overline{\partial
    _zZ}|e_1)d\overline{z}-\alpha _2(e_2|d_ze_1),\\
  dQ_2&=d\alpha _2+f(\overline{\partial _zZ}|e_2)d\overline{z}-\alpha
  _2(e_2|d_ze_2),\\
dQ_j&=d\alpha _j,\ j\ge 3.
\end{split}
\]
When forming $dQ_1\wedge d\overline{Q}_1\wedge ...\wedge
dQ_{N^2}\wedge d\overline{Q}_{N^2}$ we see that the terms in $d\alpha
_j$ for $j\ge 3$ in the expression for $dQ_1$ will not contribute, so in that expression
we can replace $d_{\alpha '}f$ by $\partial _{\alpha _2}fd\alpha
_2$. Using (\ref{ed.46}), (\ref{ed.47}), (\ref{ed'.3}), (\ref{ed.22}),
(\ref{ed.31}) this gives, where ``$\equiv$'' means equivalence up to
terms that do not influence the $2{N^2}$ form above:
\[\begin{split}
dQ_1\equiv &-\alpha _2(e_2|d_ze_1) 
+{\mO}(1)\left(\frac{N|z|^{N-1}}{\delta G}+G\delta N^3 \right)dz\\
& +{\mO}(1)\left(\frac{|z|^N}{\delta }+G^2\delta N^2
\right)d\overline{z}+{\mO}(\delta NG)d\alpha _2.
\end{split}
\]
Similarly, using also (\ref{ed.44}),
\[
\begin{split}
dQ_2=d\alpha _2+{\mO}\left(\frac{|z|^N}{\delta }+\delta
  N^2G^2+|\alpha _2|G \right)d\overline{z}+{\mO}\left(|\alpha _2|G \right)dz.
\end{split}
\]
\par 
When computing $dQ_1\wedge dQ_2$ we notice that the terms in $dz
\wedge d\overline{z}$ will not contribute to the $2N^2$-form
$dQ_1\wedge d\overline{Q}_1\wedge ...\wedge dQ_{N^2}\wedge
d\overline{Q}_{N^2}$. We get 
\begin{equation}\label{ed.48}
\begin{split}
dQ_1\wedge d Q_2\equiv & -\alpha _2(e_2|d_ze_1)\wedge d\alpha _2 \\
& +{\mO}(1)\left(\frac{N|z|^{N-1}}{\delta G}+G\delta N^3+|\alpha
  _2|\delta NG^2 \right)dz\wedge d\alpha _2\\
&+ {\mO}(1)\left(\frac{|z|^N}{\delta }+G^2\delta N^2+|\alpha
  _2|\delta NG^2 \right)d\overline{z}\wedge d\alpha _2 .
\end{split}
\end{equation}
Here,
\[
\begin{split}
(e_2|d_ze_1)&=\left( e_2|d_z \left( |Z|^{-1} \right)\overline{Z}\right)
=\left(e_2||Z|^{-1}d_z\overline{Z} \right)+\left(e_2|
  d_z\left(|Z|^{-1} \right) \overline{Z} \right)\\
&=|Z|^{-1}\left(e_2 |\overline{\partial _zZ} d\overline{z} \right)
+0=|Z|^{-1} (e_2| \overline{\partial _zZ}) dz,
\end{split}
\]
so the first term in (\ref{ed.48}) is equal to 
$$
-\frac{\alpha _2}{|Z|}(e_2|\overline{\partial
  _zZ})dz\wedge d\alpha _2 ={\mO}(1)\alpha _2 G
d z\wedge d\alpha _2.
$$
Notice that 
$dQ_1\wedge d\overline{Q}_1 \wedge dQ_2 \wedge d\overline{Q}_2=-dQ_1
\wedge dQ_2 \wedge d\overline{Q}_1 \wedge d\overline{Q}_2$. From
(\ref{ed.48}) and its complex conjugate we get 
\[\begin{split}
&dQ_1\wedge d\overline{Q}_1\wedge dQ_2\wedge d\overline{Q}_2\\
&\equiv\left( - \frac{|\alpha _2|^2}{|Z|^2}\left| \left(e_2 |
      \overline{\partial _zZ} \right) \right|^2
      +{\mO}(1)\left(\frac{N|z|^{N-1}}{\delta G}+G\delta N^3+|\alpha
    _2|\delta NG^2 \right)^2\right.\\
& \left. +{\mO}(1)|\alpha _2|G\left(\frac{N|z|^{N-1}}{\delta
      G}+G\delta N^3+|\alpha _2|G^2\delta N \right)
 \right)dz\wedge d\overline{z}\wedge d\alpha _2\wedge
 d\overline{\alpha }_2 .
\end{split}
\]
\begin{prop}\label{ed4}
  We express $Q$ in the canonical basis in ${\C}^{N^2}$ or in any
  other fixed orthonormal basis. Let $e_1(z),...,e_{N^2}(z)$ be an
  orthonormal basis in ${\C}^{N^2}$ depending smoothly on $z$ and
  with $e_1(z)=|Z(z)|^{-1}\overline{Z}(z)$, ${\C}e_1(z)\oplus{\C}
  e_2(z)={\C}\overline{Z}(z)\oplus \overline{\partial
    _zZ}(z)$. Write $Q=\alpha _1\overline{Z}(z)+\sum_2^{N^2}\alpha
  _je_j(z)$, and recall that the hypersurface 
  \begin{equation*}
   \{(z,Q)\in D(0,r_0-1/N)\times B(0,C_1N);\, E_{-+}^\delta (z,Q)=0\}
  \end{equation*}
  is given by (\ref{ed.21}) with $f$ as in (\ref{ed.22}). Then the
restriction of $dQ\wedge d\overline{Q}$ to this hypersurface, is given
by 
\begin{equation}\label{ed.49}
\begin{split}
&dQ\wedge d\overline{Q}=J(f)dz\wedge d\overline{z}\wedge d\alpha '
\wedge d\overline{\alpha }',\\
&J(f)= - \frac{|\alpha _2|^2}{|Z|^2}\left| \left(e_2 |
      \overline{\partial _zZ} \right) \right|^2
      +{\mO}(1)\left(\frac{N|z|^{N-1}}{\delta G}+G\delta N^3+|\alpha
    _2|\delta NG^2 \right)^2\\
&\phantom{J(f)=}  +{\mO}(1)|\alpha _2|G\left(\frac{N|z|^{N-1}}{\delta
      G}+G\delta N^3+|\alpha _2|G^2\delta N \right).
\end{split}
\end{equation}
Here $\alpha '=(\alpha _2,...,\alpha _{N^2})$, $d\alpha '\wedge
d\overline{\alpha }'=d\alpha _2\wedge d\overline{\alpha }_2\wedge
...\wedge d\alpha _{N^2}\wedge d\overline{\alpha }_{N^2}$.
\end{prop}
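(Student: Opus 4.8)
The plan is to assemble Proposition \ref{ed4} directly from the differential-form computations carried out just above its statement, making precise exactly which terms in $dQ_1, dQ_2, dQ_3,\dots, dQ_{N^2}$ survive when we wedge everything together into the top-degree $2N^2$-form on the hypersurface. First I would fix $z=z_0$ (legitimate, since $J(f)$ is a pointwise quantity and by Proposition \ref{ed2}, equations \eqref{ed'.6}--\eqref{ed'.8}, the chosen frame $e_j(z)$ agrees with $e_j(z_0)$ to second order, so $z$-derivatives of $e_j$ for $j\ge 3$ do not enter at $z_0$). Then I would recall the expressions for $dQ_j$ derived above: $dQ_j = d\alpha_j$ for $j\ge 3$, while $dQ_1$ and $dQ_2$ each carry, in addition to the ``main'' pieces, only contributions in $dz$, $d\overline z$, $d\alpha_2$ (all the $d\alpha_j$, $j\ge 3$, pieces in $dQ_1$ and $dQ_2$ being suppressed because they will collide with the factors $d\alpha_j\wedge d\overline\alpha_j$, $j\ge 3$, already present from $dQ_j\wedge d\overline Q_j$).

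The core step is the wedge bookkeeping. Since $dQ_j\wedge d\overline Q_j = d\alpha_j\wedge d\overline\alpha_j$ for $j\ge 3$, the whole $2N^2$-form factors as $\big(dQ_1\wedge d\overline Q_1\wedge dQ_2\wedge d\overline Q_2\big)\wedge d\alpha_3\wedge d\overline\alpha_3\wedge\cdots\wedge d\alpha_{N^2}\wedge d\overline\alpha_{N^2}$, up to sign and up to the terms already discarded. So everything reduces to computing $dQ_1\wedge d\overline Q_1\wedge dQ_2\wedge d\overline Q_2$ as a multiple of $dz\wedge d\overline z\wedge d\alpha_2\wedge d\overline\alpha_2$, which is exactly the display immediately preceding the proposition. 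I would then simply read off $J(f)$: the leading term $-\,|\alpha_2|^2|Z|^{-2}\,|(e_2|\overline{\partial_z Z})|^2$ together with the two $\mathO$-error bundles, inserting the estimates \eqref{ed.46}, \eqref{ed.47} for $\partial_z f$, $\partial_{\overline z} f$, \eqref{ed'.3} for $\partial_{\alpha_j} f$, \eqref{ed.22} for $f$, and \eqref{ed.31} for $|Z'|$, exactly as in the computation of \eqref{ed.48} and its complex conjugate. Finally I would note that $dQ\wedge d\overline Q$ is independent (up to a unimodular factor, hence no effect on the nonnegative density) of which fixed orthonormal basis we started in, so the formula holds as stated for $Q$ expressed in the canonical basis.

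The main obstacle — though it has essentially been dispatched by the lemmas already in hand — is controlling the $z$-dependence of the moving frame $e_j(z)$ uniformly well enough that the only frame-derivative terms that matter are $(e_2|d_z e_1)$ and $(e_2|d_z e_2)$, i.e. confirming that $\nabla_z e_j = \mathO((z-z_0)^2)$ for $j\ge 3$ near $z_0$ (from \eqref{ed'.8}) while $|\nabla_z e_1|, |\nabla_z e_2| = \mathO(K)$ (from \eqref{ed.43}, \eqref{ed.44}); these are precisely what let us replace $d_{\alpha'} f$ by $\partial_{\alpha_2} f\, d\alpha_2$ inside $dQ_1$ and drop the $d\alpha_j$-pieces. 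Once that is granted, the proof is a transcription: substitute, wedge, collect, and identify the coefficient. I would keep the $|\alpha_2|$-dependence explicit in the error terms (rather than bounding $|\alpha_2|\le C_1 N$ immediately) since that refinement is what the later integration over $\alpha'$ in Section \ref{pfThm} will exploit.

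\begin{proof}
We may express $Q$ in any fixed orthonormal basis, the form $dQ\wedge d\overline Q$ changing only by a unimodular factor, which does not affect the nonnegative density; so we use $e_1(z_0),\dots,e_{N^2}(z_0)$, and we compute $J(f)$ pointwise at an arbitrary $z_0\in D(0,r_0-1/N)$, taking $z=z_0$ throughout. By Proposition \ref{ed2} we may choose the frame $e_1(z),\dots,e_{N^2}(z)$ so that \eqref{ed'.6}, \eqref{ed'.7}, \eqref{ed'.8} hold, and by \eqref{ed.43}, \eqref{ed.44} we have $|\nabla_z e_1|,|\nabla_z e_2|={\mO}(K)$ while, by \eqref{ed'.8}, $\nabla_z e_j={\mO}((z-z_0))$ vanishes at $z_0$ for $j\ge 3$.

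Writing $Q=\alpha_1\overline Z(z)+\sum_2^{N^2}\alpha_k e_k(z)$ and restricting to the hypersurface $\alpha_1=f(z,\alpha')$ given by \eqref{ed.21}, \eqref{ed.22}, and setting $Q_j=(Q|e_j(z_0))$, we obtained above, at $z=z_0$,
\[
\begin{split}
dQ_1&=|\overline Z|(d_zf+d_{\alpha'}f)+f(\overline{\partial_zZ}|e_1)d\overline z-\alpha_2(e_2|d_ze_1),\\
dQ_2&=d\alpha_2+f(\overline{\partial_zZ}|e_2)d\overline z-\alpha_2(e_2|d_ze_2),\\
dQ_j&=d\alpha_j,\quad j\ge 3,
\end{split}
\]
using \eqref{ed'.7}, \eqref{ed'.8} to kill the $j\ge 3$ frame terms. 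Since $dQ_j\wedge d\overline Q_j=d\alpha_j\wedge d\overline\alpha_j$ for $j\ge 3$, when forming the top form $dQ_1\wedge d\overline Q_1\wedge\cdots\wedge dQ_{N^2}\wedge d\overline Q_{N^2}$ every occurrence of $d\alpha_j$ or $d\overline\alpha_j$ with $j\ge 3$ inside $dQ_1,d\overline Q_1,dQ_2,d\overline Q_2$ is annihilated; in $dQ_1$ we may therefore replace $d_{\alpha'}f$ by $\partial_{\alpha_2}f\,d\alpha_2$. Inserting the estimates \eqref{ed.46}, \eqref{ed.47} for $\partial_z f,\partial_{\overline z}f$, \eqref{ed'.3} for $\partial_{\alpha_j}f$, \eqref{ed.22} for $f$, \eqref{ed.31} for $|Z'|$, and $|Z|\asymp G$, exactly as in the derivation of \eqref{ed.48}, and discarding ($\equiv$) terms not influencing the final $2N^2$-form, we get
\[
\begin{split}
dQ_1\equiv&-\alpha_2(e_2|d_ze_1)+{\mO}(1)\Bigl(\tfrac{N|z|^{N-1}}{\delta G}+G\delta N^3\Bigr)dz\\
&+{\mO}(1)\Bigl(\tfrac{|z|^N}{\delta}+G^2\delta N^2\Bigr)d\overline z+{\mO}(\delta NG)d\alpha_2,
\end{split}
\]
\[
dQ_2=d\alpha_2+{\mO}\Bigl(\tfrac{|z|^N}{\delta}+\delta N^2G^2+|\alpha_2|G\Bigr)d\overline z+{\mO}(|\alpha_2|G)dz.
\]
Since $(e_2|d_ze_1)=|Z|^{-1}(e_2|\overline{\partial_zZ})\,dz$, the leading term of $dQ_1$ is ${\mO}(1)\alpha_2 G\,dz$. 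Wedging $dQ_1\wedge dQ_2$ kills the $dz\wedge d\overline z$ contributions and yields \eqref{ed.48}; wedging with its complex conjugate and using $dQ_1\wedge d\overline Q_1\wedge dQ_2\wedge d\overline Q_2=-dQ_1\wedge dQ_2\wedge d\overline Q_1\wedge d\overline Q_2$ gives, as computed above,
\[
\begin{split}
&dQ_1\wedge d\overline Q_1\wedge dQ_2\wedge d\overline Q_2\\
&\equiv\Bigl(-\tfrac{|\alpha_2|^2}{|Z|^2}\bigl|(e_2|\overline{\partial_zZ})\bigr|^2+{\mO}(1)\Bigl(\tfrac{N|z|^{N-1}}{\delta G}+G\delta N^3+|\alpha_2|\delta NG^2\Bigr)^2\\
&\qquad+{\mO}(1)|\alpha_2|G\Bigl(\tfrac{N|z|^{N-1}}{\delta G}+G\delta N^3+|\alpha_2|G^2\delta N\Bigr)\Bigr)dz\wedge d\overline z\wedge d\alpha_2\wedge d\overline\alpha_2.
\end{split}
\]
Multiplying by $d\alpha_3\wedge d\overline\alpha_3\wedge\cdots\wedge d\alpha_{N^2}\wedge d\overline\alpha_{N^2}$ and rearranging factors gives \eqref{ed.49} with $J(f)$ as stated, the discarded terms contributing nothing to the $2N^2$-form.
\end{proof}
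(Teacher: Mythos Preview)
Your proposal is correct and follows essentially the same approach as the paper: indeed, Proposition \ref{ed4} in the paper is stated as a summary of the differential-form computation carried out in the preceding pages, with no separate proof block, and you have faithfully reconstructed and condensed that computation (fixing $z=z_0$, using the frame of Proposition \ref{ed2}, reducing to the $4$-form $dQ_1\wedge d\overline Q_1\wedge dQ_2\wedge d\overline Q_2$ via $dQ_j=d\alpha_j$ for $j\ge 3$, and reading off $J(f)$ from \eqref{ed.48} and its conjugate).
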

\section{Proof of Theorem \ref{ed5}}\label{pfThm}
Let $Q\in {\C}^{N^2}$ be an $N\times N$ matrix whose entries are
independent random variables $\sim {\mathcal{N}}_{\C}(0,1)$, so that the
corresponding probability measure is
$$
\pi ^{-N^2}e^{-|Q|^2}(2i)^{-N^2}d\overline{Q} _1\wedge dQ_1\wedge
...\wedge d\overline{Q}_{N^2}\wedge dQ_{N^2}=\frac{1}{(2\pi
  i)^{N^2}}e^{-|Q|^2}d\overline{Q}\wedge dQ.
$$
We are interested in 
\begin{equation}\label{ed.50}
K_\phi ={\E}\left(1_{B_{{\C}^{N^2}}}(0,1)\sum _{\lambda \in
    \sigma (A_0+\delta Q)}\phi (\lambda ) \right),\ \ \phi \in C_0(D(0,r_0-1/N),
\end{equation}
which is of the form (\ref{ed.3}) with
\begin{equation}\label{ed.51}
m(Q)=1_{B_{{\C}^{N^2}}}(Q)\pi ^{-N^2}e^{-|Q|^2},
\end{equation}
so we have (\ref{ed.8}), (\ref{ed.9}) with $J(f)$ as in (\ref{ed.49})
and $f$ as in (\ref{ed.21}). More explicitly,
$$
\Xi (z)=\int_{|f|^2|Z(z)|^2+|\alpha '|^2\le C_1^2N^2} \pi
^{-N^2}e^{-|f(z,\alpha ')|^2|Z(z)|^2-|\alpha '|^2}J(f)(z,\alpha ')L(d\alpha ').
$$
By \eqref{ed.22}, \eqref{ed.11}, \eqref{ed.16} : 
\begin{equation*}
 |f| \leq \mO(1)\frac{N}{G}\left( \frac{|z|^N}{\delta NG} + \delta NG\right)
  \ll \frac{N}{G}.
\end{equation*}
We now strengthen \eqref{ed.11}, \eqref{ed.16} to the assumption
\begin{equation}\label{ed.52}
\frac{|z|^N}{\delta N G}+\delta NG \ll \frac{1}{N},\hbox{ for all }z\in D(0,r_0),
\end{equation}
implying that $|f|G \ll 1$, for all $z\in D(0,r_0)$. Equivalently, 
by the same reasoning as after \eqref{ed.25}, $r_0$ should satisfy 
\begin{equation}\label{ed.52.5}
 \frac{r_0^N}{\delta N G(r_0)} + \delta NG(r_0) \ll \frac{1}{N}.
\end{equation}
Then 
$$
e^{-|f(z,\alpha ')|^2|Z(z)|^2}
  = 1+{\mO}(1)N^2
    \left(\frac{|z|^N}{\delta NG}+\delta NG\right)^2,
$$
and using (\ref{ed.49}), we get
\[
\begin{split}
\Xi (z)=&\left(1+ {\mO}(1) N^2\left(\frac{|z|^N}{\delta NG}
  +\delta NG\right)^2 \right)\times \\ & \frac{|(e_2|\overline{\partial
_zZ})|^2}{|Z|^2}\int _{|(f|Z|,\alpha ')|\le C_1N}|\alpha _2|^2
e^{-|\alpha '|^2}\pi ^{1-N^2}L(d\alpha ')\\
&+{\mO}(1)\int e^{-|\alpha '|^2}\left(\frac{N|z|^{N-1}}{\delta
    G}+G\delta N^3+|\alpha _2|\delta NG^2
\right)^2\pi^{1-N^2}L(d\alpha ')\\
+&{\mO}(1)\int e^{-|\alpha '|^2}|\alpha _2|G\left(\frac{N|z|^{N-1}}{\delta
    G}+G\delta N^3+|\alpha _2|\delta NG^2
\right)\pi^{1-N^2}L(d\alpha ').
\end{split}
\]

\par Since $|f||Z|\ll N$, the first integral is equal to
$$
\int_{\C} \frac{1}{\pi }|w|^2e^{-|w|^2}L(dw)+{\mO}
\left(e^{-N^2/{\mO}(1)} \right) =1+{\mO}\left(e^{-N^2/{\mO}(1)} \right).
$$
The sum of the other two integrals is equal to 
\[
\begin{split}
&{\mO}(1)\left(\left(\frac{N|z|^{N-1}}{\delta G}+G\delta N^3+\delta
  NG^2\right)^2+G \left(\frac{N|z|^{N-1}}{\delta G}+G\delta N^3+\delta
  NG^2\right)\right)\\
&={\mO}(1)\left(\left(\frac{N|z|^{N-1}}{\delta G}+G\delta
    N^3\right)^2+G \left(\frac{N|z|^{N-1}}{\delta G}+G\delta
    N^3\right)\right) .
\end{split}
\]
Noticing that
$$
\frac{|(e_2|\overline{\partial _zZ})|^2}{|Z|^2} ={\mO}(G^2),
$$
we deduce that
\begin{equation}\label{ed.53}
\begin{split}
\Xi(z)=& \frac{|(e_2|\overline{\partial _zZ})|^2}{|Z|^2} \\
&+ {\mO}(1)\left(G^2N^2\left(\frac{|z|^{N-1}}{\delta G^2}+\delta
    N^2\right)^2+G^2N \left(\frac{|z|^{N-1}}{\delta G^2}+\delta
    N^2\right)\right) .
\end{split}
\end{equation}
We next study the leading term in (\ref{ed.53}), given by
\begin{equation}\label{ed.54}
\frac{|(\overline{\partial _zZ}|e_2)|^2}{\pi |Z|^2}.
\end{equation}
Since $\overline{\partial _zZ}$ belongs to the span of
$e_1=\overline{\partial _zZ}/|Z|$ and $e_2$, we have
$$
|(\overline{\partial _zZ}|e_2)|^2=|\overline{\partial
  _zZ}|^2-|(\overline{\partial _zZ}|e_1)|^2,
$$ 
so the leading term (\ref{ed.54}) is 
$$\frac{1}{\pi |Z|^2}\left(|\overline{\partial
    _zZ}|^2-\frac{|(\overline{\partial _zZ}|\overline{Z})|^2}{|Z|^2}
\right),$$
which by (\ref{ed.36}) is equal to 
\begin{equation}\label{ed.55}
{\frac{2}{\pi t}\left(\frac{(t\partial _t)^2K}{K}-\frac{(t\partial _tK)^2}{K^2} \right)}_{t=|z|^2}.
\end{equation}
Here, $K=K_N(t)=\sum_0^{N-1}t^\nu $ is the function appearing in
Proposition \ref{ed3}. Let us first compute the limiting quantity
obtained by replacing $K=K_N$ in (\ref{ed.55}) by $K_\infty
=1/(1-t)$. Since $\partial _tK_\infty =K_\infty ^2$, we get
$$
t\partial _tK_\infty =tK_\infty ^2,\ \ (t\partial _t)^2K_\infty
=tK_\infty ^2+2t^2K_\infty ^3,
$$
and
\begin{equation}\label{ed.56}
\frac{2}{\pi t}\left(\frac{(t\partial _t)^2K_\infty }{K_\infty
  }-\frac{(t\partial _tK_\infty )^2}{K_\infty ^2} \right)=\frac{2}{\pi
}K_\infty ^2=\frac{2}{\pi }\frac{1}{(1-t)^2}.
\end{equation}

\par We next approximate the expression (\ref{ed.55}) with
(\ref{ed.56}), using (\ref{ed.39}) and the fact that $K=(1+{\mO}(t^N))K_\infty $ 
(uniformly with respect to $N$). The expression
(\ref{ed.55}) is equal to
\[\begin{split}
&\frac{2}{\pi tK^2}(K(t\partial _t)^2K-(t\partial _tK)^2)\\
&= \frac{2(1+{\mO}(t^N))}{\pi tK_\infty ^2}\left(K_\infty
  (t\partial _t)^2K_\infty -(t\partial _tK_\infty )^2 +{\mO}
  (N^2t^NK_\infty ^2) \right).
\end{split}
\]
Here,
$$
(t\partial _tK_\infty )^2={\mO}(t^2K_\infty ^4),\ \ K_\infty
(t\partial _t)^2K_\infty ={\mO}(tK_\infty ^3+t^2K_\infty ^4),
$$
so the last expression becomes,
$$
\frac{2}{\pi t}\left(\frac{(t\partial _t)^2K_\infty }{K_\infty
  }-\frac{(t\partial _tK_\infty )^2}{K_\infty ^2} \right)+{\mO}
  (t^NK_\infty +t^{N+1}K_\infty ^2+t^{N-1}N^2),
$$
where the first two terms in the remainder are dominated by the last one.
We conclude that the difference
between the expressions (\ref{ed.55}) and (\ref{ed.56}) is ${\mO}(t^{N-1}N^2)$, 
and using also (\ref{ed.53}), we get,
\begin{equation}\label{ed.57}
\begin{split}
\Xi(z)=& \frac{2}{\pi (1-|z|^2)^2}+{\mO}(|z|^{2(N-1)}N^2) \\
&+ {\mO}(1)\left(G^2N^2\left(\frac{|z|^{N-1}}{\delta G^2}+\delta
    N^2\right)^2+G^2N \left(\frac{|z|^{N-1}}{\delta G^2}+\delta
    N^2\right)\right) .
\end{split}
\end{equation}
The remainder term can be written
\[
{\mO}(G^2)\left(\frac{|z|^{2(N-1)}N^2}{G^2}+\frac{|z|^{2(N-1)}N^2}{\delta
  ^2G^4}+\delta ^2N^6 +\frac{|z|^{N-1}N}{\delta G^2}+\delta N^3 \right).
\]
By (\ref{ed.52}), $\frac{1}{\delta G}\gg N^2$, so the second term is 
$$
\gg \frac{|z|^{2(N-1)}N^2}{G^2}N^4,
$$
which is much larger than the first term. We now strengthen
(\ref{ed.52}) to
$$
\frac{|z|^{N-1}}{\delta G^2}+\delta N^2\ll \frac{1}{N},
$$
or equivalently to
\begin{equation}\label{ed.58}
\frac{|z|^{N-1}N}{\delta G^2}+\delta N^3\ll 1 .
\end{equation}
Then remainder in (\ref{ed.57}) becomes
$$
{\mO}(G^2)\left(\frac{|z|^{N-1}N}{\delta G^2}+\delta N^3 \right),
$$
and (\ref{ed.57}) becomes
\begin{equation}\label{ed.59}
\Xi (z)=\frac{2}{\pi (1-|z|^2)^2}\left(1+{\mO}
\left(\frac{|z|^{N-1}N}{\delta G^2} +\delta N^3 \right) \right),
\end{equation}
which concludes the proof of Theorem \ref{ed5}.

\providecommand{\bysame}{\leavevmode\hbox to3em{\hrulefill}\thinspace}
\providecommand{\MR}{\relax\ifhmode\unskip\space\fi MR }
% \MRhref is called by the amsart/book/proc definition of \MR.
\providecommand{\MRhref}[2]{%
  \href{http://www.ams.org/mathscinet-getitem?mr=#1}{#2}
}
\providecommand{\href}[2]{#2}

\end{document}